\newcommand{\R}{\mathbb{R}}         
\newcommand{\C}{\mathbb{C}}         
\renewcommand{\S}{\mathbb{S}}          
\newcommand{\Ls}{\mathscr{L}}
\newcommand{\Vb}{\mathbf{V}}
\newcommand{\vb}{\mathbf{v}}
\newcommand{\Pb}{\mathbf{P}}
\newcommand{\cge}{\succcurlyeq}
\newcommand{\cle}{\preccurlyeq}
\newcommand{\cl}{\prec}
\newcommand{\cg}{\succ}
\newcommand{\kron}{\otimes}         
\newcommand{\Delf}{\mathbf{\Delta}}
\newcommand{\rhi}{\mathrm{RH}_\infty}
\newcommand{\rli}{\mathrm{RL}_\infty}
\newcommand{\diag}{\mathrm{diag}}
\newcommand{\eig}{\mathrm{eig}}
\newcommand{\Del}{\Delta}
\newcommand{\del}{\delta}
\newcommand{\ga}{\gamma}
\newcommand{\om}{\omega}
\newcommand{\io}{i\omega}
\newcommand{\vect}{\mathrm{vec}}
\newcommand{\mat}[2]{\left(\begin{array}{@{}#1@{}}#2\end{array}\right)} 
\newcommand{\smat}[1]{\left(\begin{smallmatrix}#1\end{smallmatrix}\right)}
\newcommand{\teq}[1]{\quad\text{#1}\quad} 
\newenvironment{red_test}{\color{red}}{} 
\renewcommand{\t}{\tilde}
\newcommand{\mstrut}[1]{\rule{0pt}{#1}}
\let\oldhline=\hline 
\renewcommand{\hline}{\oldhline\mstrut{2.5ex}}
\let\oldhdashline=\hdashline 
\renewcommand{\hdashline}{\oldhdashline\mstrut{2.5ex}}
\newtheorem{theo}{Theorem}
\newtheorem{lemm}[theo]{Lemma}
\newtheorem{coro}[theo]{Corollary}
\newtheorem{defi}[theo]{Definition}
\newenvironment{proof}[1][Proof]{
	\bf #1. \rm}
{\hfill \footnotesize{$\blacksquare$}\vspace{2ex}}
\begin{document}
	\begin{frontmatter}
		
		\title{A Dynamic S-Procedure for Dynamic Uncertainties\thanksref{footnoteinfo}}
		
		\thanks[footnoteinfo]{Funded by Deutsche Forschungsgemeinschaft (DFG, German Research Foundation) under Germany’s Excellence Strategy - EXC 2075 - 390740016. We acknowledge the support by the Stuttgart Center for Simulation Science (SimTech).}
		
		\author[First]{Tobias Holicki and Carsten W. Scherer}
		
		\address[First]{Department of Mathematics, University of Stuttgart, Germany,
			e-mail: \{tobias.holicki, carsten.scherer\}@imng.uni-stuttgart.de
		}

		\begin{abstract}                
			We show how to compose robust stability tests for uncertain systems modeled as linear fractional representations and affected by various types of dynamic uncertainties. Our results 
			are formulated in terms of linear matrix inequalities and
			rest on the recently  established notion of finite-horizon integral quadratic constraints with a terminal cost. 
			The construction of such constraints is motivated by an unconventional application of the S-procedure in the frequency domain with dynamic Lagrange multipliers. Our technical contribution reveals how this construction can be performed by dissipativity arguments in the time-domain and in a lossless fashion.
			This opens the way for generalizations to time-varying or hybrid systems.								
			
		\end{abstract}

		\begin{keyword}
			Robustness Analysis, Dynamic Uncertainties, Integral Quadratic Constraints, Linear Matrix Inequalities.
		\end{keyword}
		
	\end{frontmatter}
	
	
	
	\begin{textblock}{12}(1.75, 15.75)
		\fbox{
			\begin{minipage}{\textwidth}
				\small \textcopyright~ 2022 IFAC. This work has been published in IFAC-PapersOnline under a Creative Commons Licence CC-BY-NC-ND. \\ DOI: 10.1016/j.ifacol.2022.09.331 \hfill
				DOI for corresponding Code: 10.24433/CO.3032988.v1 
			\end{minipage}
		}
	\end{textblock}

	\section{Introduction}

	The S-procedure (also referred to as S-lemma or S-method)
	was developed more than 90 years ago and has applications in optimization, control and many other areas in mathematics.
	Naturally, by now there exists a multitude of variations of the S-procedure, some of which can be found, e.g., in the books by \cite{BoyGha94}, \cite{BenNem01} or by \cite{BoyVan04}.
	Detailed surveys on results related to the S-procedure can be found in \citep{Uhl79}, \citep{PolTer07} and in \citep{GusLik06}.

	Traditionally, the S-procedure guarantees the positivity of a quadratic function on a set defined by quadratic inequalities through the positivity of a related quadratic function. The latter function is constructed by Lagrange relaxation and, hence, involves sign-constrained Lagrange multipliers that are also called scalings in the context of control.
	In the book by \cite{BoyGha94}, for example, it is shown how this standard variant can be used for the robustness analysis of uncertain dynamical systems. However, this approach is limited since it only permits the construction of robustness tests with scalings that are real-valued and structured. In contrast, e.g., $\mu$-theory \citep{PacDoy93} offers the use of frequency-dependent scalings for dynamic and parametric uncertainties in order to substantially reduce the conservatism of such tests.
	Similarly, the so-called full-block variant of the S-procedure in \citep{Sch97, Sch01} permits the reduction of conservatism by relying on unstructured scalings, even if the uncertainty itself is structured.
	
	Robust stability and performance tests based on the full-block S-procedure and most variants thereof can be viewed as an application of integral quadratic constraint (IQC) analysis with static (frequency-independent) multipliers \citep{MegRan97}. Also in this context, it is well-established that dynamic (frequency-dependent) multipliers often lead to less conservative results. Recently, \cite{Sch21} has hinted at how to construct finite time-horizon IQCs with dynamic multipliers for one dynamic uncertainty and with frequency domain arguments.
	
	Based on these ideas, the purpose of this paper is to propose a dynamic version of the S-procedure which has not found much attention in the literature. In particular, we provide a new alternative and simplified proof of the aforementioned result of \cite{Sch21} without relying on frequency domain arguments. This is beneficial, e.g., for its generalization to hybrid systems, where no suitable notion of a frequency domain exists.
	Moreover, we demonstrate that our conceptual proof allows for deriving several novel variations and extensions that permit us, e.g., to analyze the robustness of a system against dynamic uncertainties with a Nyquist plot known to be located in a given LMI region.
	This substantially extends, for example, the inspiring results by \cite{Bal02}.

	\vspace{1ex}
	\noindent\textit{Outline.} %
	%
	The remainder of the paper is organized as follows. After a short paragraph on notation, we specify the considered robust analysis problem and recall some preliminary results in Section \ref{DSP::sec::preliminaries}. In Section \ref{DSP::sec::main}, we provide and elaborate on our main analysis result for systems affected by a class of dynamic uncertainties.
	In Section~\ref{DSP::sec::variations}, we demonstrate how to modify our approach in order to handle dynamic uncertainties whose frequency response is located in regions of the complex plane beyond disks and half-spaces.
	We conclude with a numerical example and some further remarks in Sections \ref{DSP::sec::example} and \ref{DSP::sec::conclusions}, respectively. Finally, most proofs are moved to the appendix.

	\vspace{1ex}
	\noindent\textit{Notation.} %
	%
	Let
	$L_2^n\! :=\! \{x \!\in\! L_{2e}^n:  \int_0^\infty \!x(t)^T\! x(t) \,dt\!<\! \infty \}$ where $L_{2e}^n$ is the space of locally square integrable functions $x:[0, \infty) \to \R^n$.
	Moreover, $\S^n$ denotes the set of symmetric $n\times n$ matrices, and $\rhi^{m \times n}$ ($\rli^{m\times n}$) is the space of real rational $m \times n$ matrices without poles in the extended closed right half-plane (imaginary axis) and is equipped with the maximum norm $\|\cdot\|_\infty$.
	For a transfer matrix $G(s) = D + C(sI - A)^{-1}B$, we take $(-A^T,  C^T,  -B^T,  D^T)$ as a realization of $G^\ast(s)=G(-s)^T$.
	Finally, we use the abbreviation
	\begin{equation*}
		\arraycolsep=1pt
		\diag(X_1, \dots, X_N) := \smat{X_1 & & 0 \\[-1ex] & \ddots & \\ 0 & & X_N}
	\end{equation*}
	for matrices $X_1, \dots, X_N$,
	utilize the Kronecker product $\kron$ as defined in \citep{HorJoh91}
	and indicate objects that can be inferred by symmetry or are not relevant with the symbol ``$\bullet$''.
	
	\section{Preliminaries}\label{DSP::sec::preliminaries}
	
	For real matrices $A, B, C, D$ and some initial condition $x(0) \in \R^n$, let us consider the uncertain feedback interconnection
	\begin{subequations}
		\label{DSP::eq::interconn}
		\begin{equation}
			\mat{c}{\dot x(t) \\ z(t)} = \mat{cc}{A & B \\ C& D} \mat{c}{x(t) \\ w(t)},\quad
			w(t) = \Del(z)(t)
			\label{DSP::eq::sys}
		\end{equation}
		for $t \geq 0$. The uncertainty $\Del$ is assumed to be a stable LTI system described by its  input-output map $\Del: L_{2e}^k \to L_{2e}^l$,
		\begin{equation}
			\Del(z)(t)
			=\int_{0}^t C_\Del e^{A_\Del(t - s)}B_\Del z(s)~ds + D_\Del z(t)
			\label{DSP::eq::uncertainty}
		\end{equation}
		for some real matrices $A_\Del, B_\Del, C_\Del, D_\Del$.
		We slightly abuse the notation by using the same symbol for the map \eqref{DSP::eq::uncertainty} and the transfer matrix ${\Del(s)= C_\Del (s I - A_\Del)^{-1}B_\Del + D_\Del}$.
		We suppose that $\Delta$ satisfies some structural constraints, which are imposed on its frequency response as
		\begin{equation}
			\Del(\io) \in \Vb \teq{ for all } \om \in \R \cup \{\infty \}
			\label{DSP::eq::uncertainty_set}
		\end{equation}
	\end{subequations}
	for some given value set $\Vb \subset \C^{l \times k}$. We denote the set of all such dynamic uncertainties by $\Delf(\Vb)$.
	
	\begin{defi}
		The interconnection \eqref{DSP::eq::interconn} is said to be robustly stable if it is well-posed, i.e., if $\det(I - DV) \neq 0$ holds for all $V \in \Vb$, and if there exists a constant $M > 0$ such that the state-trajectories $x(\cdot)$ satisfy
		\begin{equation*}
			\lim_{t \to \infty} x(t) = 0
			\teq{ and }
			\|x(t)\| \leq M\|x(0)\|
			\text{ for all }t \geq 0
		\end{equation*}
		for all initial conditions $x(0) \in \R^n$ and any $\Del \in \Delf(\Vb)$.
	\end{defi}

	The following robust analysis result is a consequence of the (full-block) S-procedure as proposed by \cite{Sch97}.

	\begin{lemm}
		\label{DSP::lem::fb_s_proc}
		The interconnection \eqref{DSP::eq::interconn} is robustly stable if there exist some (Lyapunov) certificate $X \in \S^n$ and a multiplier $P\in \S^{k+l}$ satisfying the LMIs
		\begin{subequations}
			\label{DSP::lem::eq::fb_s_proc_lmi}
			\begin{equation}
				X \cg 0,
				\label{DSP::lem::eq::fb_s_proc_lmia}
			\end{equation}
			\begin{equation}
				(\bullet)^T \mat{cc}{0 & X \\ X & 0} \mat{cc}{I & 0 \\ A & B} + (\bullet)^T P \mat{cc}{C & D \\ 0 & I} \cl 0,
				\label{DSP::lem::eq::fb_s_proc_lmib}
			\end{equation}
			\begin{equation}
				\mat{c}{0 \\ I}^T P \mat{c}{0 \\ I} \cle 0
				\label{DSP::lem::eq::fb_s_proc_extra_dynamic}
			\end{equation}
			and
			\begin{equation}
				\label{DSP::lem::eq::fb_s_proc_lmic}
				\mat{c}{I \\ V}^\ast P \mat{c}{I \\ V} \cge 0
				\teq{ for all }V \in \Vb.
			\end{equation}
		\end{subequations}
	\end{lemm}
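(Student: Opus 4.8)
The plan is to verify, directly from the four LMIs and without the frequency-domain machinery developed later in the paper, the two ingredients in the definition of robust stability: well-posedness, and boundedness/decay of the state. Well-posedness follows from a short contradiction argument. If $\det(I - DV) = 0$ for some $V \in \Vb$, pick $0 \neq \eta \in \C^k$ with $\eta = DV\eta$ and set $\xi := V\eta$; then $\xi \neq 0$ (else $\eta = D\xi = 0$) and $\col(D\xi,\xi) = \col(\eta, V\eta)$. Restricting \eqref{DSP::lem::eq::fb_s_proc_lmib} to $x = 0$ (and extending to complex vectors, which is harmless since $P$ is real) gives $\col(D\xi,\xi)^\ast P\col(D\xi,\xi) < 0$, whereas \eqref{DSP::lem::eq::fb_s_proc_lmic} gives $\col(\eta,V\eta)^\ast P\col(\eta,V\eta) = \eta^\ast\big(\col(I,V)^\ast P\col(I,V)\big)\eta \geq 0$ — a contradiction for the same nonzero vector.

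For the state behaviour, fix $\Del \in \Delf(\Vb)$ and an initial condition $x_0$, and consider the resulting closed-loop trajectory $(x,z,w)$ on $[0,\infty)$ with the internal state of $\Del$ at rest. Since \eqref{DSP::lem::eq::fb_s_proc_lmib} is strict it holds with a margin $-\eps I$, so left/right multiplying it by $\col(x,w)$ and using $\tfrac{d}{dt}(x^T X x) = 2x^T X \dot x$ yields the dissipation inequality
\begin{equation*}
	\tfrac{d}{dt}\big(x(t)^T X x(t)\big) + \col(z(t),w(t))^T P\col(z(t),w(t)) \leq -\eps\big(\|x(t)\|^2 + \|w(t)\|^2\big),\quad t\geq 0.
\end{equation*}

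The crucial auxiliary step is the \emph{finite-horizon} inequality $\int_0^T \col(z,w)^T P\col(z,w)\,dt \geq 0$ for every $T\geq 0$. I would obtain it by truncation: set $z_T := z$ on $[0,T]$ and $z_T := 0$ afterwards, so $z_T \in L_2^k$, and $w_T := \Del(z_T) \in L_2^l$, which agrees with $w$ on $[0,T]$ by causality. Parseval's theorem, the identity $\h w_T(\io) = \Del(\io)\h z_T(\io)$, and \eqref{DSP::lem::eq::fb_s_proc_lmic} evaluated at $V = \Del(\io) \in \Vb$ give $\int_0^\infty \col(z_T,w_T)^T P\col(z_T,w_T)\,dt \geq 0$; splitting this at $T$ and discarding the tail term $\int_T^\infty w_T^T\big(\col(0,I)^T P\col(0,I)\big)w_T\,dt$, which is nonpositive by \eqref{DSP::lem::eq::fb_s_proc_extra_dynamic}, leaves exactly $\int_0^T \col(z,w)^T P\col(z,w)\,dt \geq 0$.

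Integrating the dissipation inequality over $[0,T]$ and combining with this and with \eqref{DSP::lem::eq::fb_s_proc_lmia} gives $x(T)^T X x(T) + \eps\int_0^T(\|x\|^2 + \|w\|^2)\,dt \leq x_0^T X x_0$ for all $T$. The first summand yields $\|x(T)\| \leq \sqrt{\la_{\max}(X)/\la_{\min}(X)}\,\|x_0\|$, the required uniform bound with $M := \sqrt{\la_{\max}(X)/\la_{\min}(X)}$; the second forces $x,w\in L_2$, hence $\dot x = Ax+Bw\in L_2$, and integrability of $\|x(\cdot)\|^2$ together with that of its derivative $2x^T\dot x$ then forces $x(t)\to 0$. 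I expect the finite-horizon inequality to be the real obstacle: the pointwise positivity $\col(z,w)^T P\col(z,w)\geq 0$ is false for general static $P$, and the truncation argument works only because \eqref{DSP::lem::eq::fb_s_proc_extra_dynamic} suppresses the otherwise indefinite contribution of the free tail of $w_T$ beyond $T$.
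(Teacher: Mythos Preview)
The paper does not actually prove Lemma~\ref{DSP::lem::fb_s_proc}; it is simply stated as ``a consequence of the (full-block) S-procedure as proposed by \cite{Sch97}'' and referenced out. So there is no in-paper argument to compare against.

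Your direct proof is correct and self-contained. The well-posedness step is the standard one: extracting the lower-right $l\times l$ block of \eqref{DSP::lem::eq::fb_s_proc_lmib} gives $\smat{D\\I}^T P\smat{D\\I}\cl 0$, and the vector $\col(D\xi,\xi)=\col(\eta,V\eta)$ with $\xi=V\eta\neq 0$ produces the contradiction with \eqref{DSP::lem::eq::fb_s_proc_lmic}. For the dynamic part, the dissipation inequality with slack $-\eps I$ is immediate, and your truncation argument is exactly the mechanism that turns the infinite-horizon FDI \eqref{DSP::lem::eq::fb_s_proc_lmic} into a finite-horizon IQC: Parseval applied to $(z_T,\Delta(z_T))\in L_2$ gives nonnegativity of $\int_0^\infty$, and \eqref{DSP::lem::eq::fb_s_proc_extra_dynamic} is precisely what lets you discard the tail $\int_T^\infty w_T^T\big(\smat{0\\I}^T P\smat{0\\I}\big)w_T\,dt\leq 0$. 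This is the classical ``hard IQC'' route (cf.\ Megretski--Rantzer) specialized to static multipliers, and it explains transparently why the extra constraint \eqref{DSP::lem::eq::fb_s_proc_extra_dynamic} is needed for dynamic $\Delta$ but can be dropped for parametric ones, exactly as the paper remarks after \eqref{DSP::eq::concrete_value_set}. The final step ($x,\dot x\in L_2\Rightarrow x(t)\to 0$ via integrability of $\|x\|^2$ and of its derivative $2x^T\dot x$) is fine.

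One small point worth making explicit: existence of the closed-loop trajectory on $[0,\infty)$ is guaranteed because $\Delta(\infty)=D_\Delta\in\Vb$ by \eqref{DSP::eq::uncertainty_set}, so well-posedness gives $\det(I-DD_\Delta)\neq 0$ and the algebraic loop resolves into a genuine LTI system in the joint state $(x,x_\Delta)$.
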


	The inequalities \eqref{DSP::lem::eq::fb_s_proc_lmi} are not directly numerically tractable since \eqref{DSP::lem::eq::fb_s_proc_lmic} usually involves an infinite number of LMIs.
	As a remedy one typically does not search for a suitable multiplier $P$ with
	\eqref{DSP::lem::eq::fb_s_proc_lmic} in the full set $\S^{k+l}$,
	but one confines the search to a subset $\Pb(\Vb)\subset \S^{k+l}$ which admits an
	LMI representation. This means that there exist affine symmetric-valued maps $F$ and $G$ on $\R^\bullet$ with $\Pb(\Vb)=\left\{F(\nu)\,\middle|\ \nu \in \R^{\bullet} \text{ and }G(\nu)\cg 0\right\}$.
	In order to counteract the conservatism introduced by this restriction while maintaining computational efficiency, one should aim for subsets that capture the properties of $\Vb$ as well and as simple as possible.
	A detailed discussion and several choices for concrete instances of $\Vb$ are found, e.g., in \citep{SchWei00}.

	First, let us assume that $\Delta$ is a single repeated dynamic uncertainty, i.e., $\Del = \del I_k$ for some $\del \in \rhi^{1 \times 1}$. Moreover, we suppose that its values on the imaginary axis are confined to the set
	\begin{equation}
		\label{DSP::eq::concrete_value_set}
		\Vb  := \left\{v I_k ~\middle|~ v \in \C \text{ and }\mat{c}{1 \\ v}^\ast P_0 \mat{c}{1 \\ v} \geq 0 \right\}
	\end{equation}
	for some given matrix $P_0 = \smat{q & s \\s & r}\in \S^2$ with $r \leq 0$.
	This means that the Nyquist plot of $\del$ is contained in the disk or half-plane corresponding to the matrix $P_0$.

	Note that $r\leq 0$ is a technical requirement in our proof, and it implies that the set $\Vb$ is convex; this restriction and the related inequality \eqref{DSP::lem::eq::fb_s_proc_extra_dynamic} in Lemma \ref{DSP::lem::fb_s_proc} can be dropped if the uncertainty $\Del$ affecting the system \eqref{DSP::eq::sys} is known to be parametric.
	In view of the KYP lemma \citep{Ran96}, the constraint \eqref{DSP::eq::uncertainty_set} with \eqref{DSP::eq::concrete_value_set} can equivalently be expressed in the time domain by means of a nonstrict dissipation inequality; in the sequel, we rather use the frequency domain formulation for brevity.

	The concrete description \eqref{DSP::eq::concrete_value_set} leads to the following tractable robust stability test.
	
	\begin{coro}
		\label{DSP::coro::dynamic_rep_static}
		The interconnection \eqref{DSP::eq::interconn} with \eqref{DSP::eq::concrete_value_set} is robustly stable if there exist some certificate $X \in \S^n$ and a multiplier
		\begin{equation*}
			P \in \Pb(\Vb) := \left\{ P_0 \kron M~\middle|~M \in \S^{k},\ M \cg 0\right\}
		\end{equation*}
		satisfying \eqref{DSP::lem::eq::fb_s_proc_lmia} and \eqref{DSP::lem::eq::fb_s_proc_lmib}.
	\end{coro}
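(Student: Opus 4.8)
The plan is to obtain the statement as a direct specialization of Lemma~\ref{DSP::lem::fb_s_proc}: I would apply that lemma with the given certificate $X$ and with the structured multiplier $P = P_0 \kron M \in \S^{k+l}$. Since the inequalities \eqref{DSP::lem::eq::fb_s_proc_lmia} and \eqref{DSP::lem::eq::fb_s_proc_lmib} are already part of the hypotheses, the whole task reduces to showing that mere membership $P \in \Pb(\Vb)$ automatically forces the two remaining conditions \eqref{DSP::lem::eq::fb_s_proc_extra_dynamic} and \eqref{DSP::lem::eq::fb_s_proc_lmic}; robust stability of \eqref{DSP::eq::interconn} with \eqref{DSP::eq::concrete_value_set}, including well-posedness, is then immediate from Lemma~\ref{DSP::lem::fb_s_proc}.

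First I would record the relevant structural facts. Because $\Del = \del I_k$ is a repeated scalar uncertainty we have $l = k$, the value set \eqref{DSP::eq::concrete_value_set} consists of the matrices $V = vI_k$ with $v \in \C$ and $\smat{1 \\ v}^\ast P_0 \smat{1 \\ v} \ge 0$, and with $P_0 = \smat{q & s \\ s & r}$ the multiplier has the explicit block form $P = P_0 \kron M = \smat{qM & sM \\ sM & rM}$. Hence its lower-right block equals $\smat{0 \\ I}^T P \smat{0 \\ I} = rM$. Since $M \cg 0$ and $r \le 0$ by assumption, this gives $rM \cle 0$, which is precisely \eqref{DSP::lem::eq::fb_s_proc_extra_dynamic}.

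For \eqref{DSP::lem::eq::fb_s_proc_lmic} I would invoke the mixed-product rule for the Kronecker product: writing $\smat{I_k \\ vI_k} = \smat{1 \\ v} \kron I_k$, for every $V = vI_k \in \Vb$ one computes
\begin{equation*}
	\mat{c}{I \\ V}^\ast P \mat{c}{I \\ V} = \left(\mat{c}{1 \\ v}^\ast \kron I_k\right)(P_0 \kron M)\left(\mat{c}{1 \\ v} \kron I_k\right) = \left(\mat{c}{1 \\ v}^\ast P_0 \mat{c}{1 \\ v}\right) M .
\end{equation*}
The scalar prefactor is nonnegative exactly by the defining inequality of $\Vb$ in \eqref{DSP::eq::concrete_value_set}, and $M \cg 0$, so the whole expression is $\cge 0$, establishing \eqref{DSP::lem::eq::fb_s_proc_lmic} for all $V \in \Vb$. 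With all four inequalities of Lemma~\ref{DSP::lem::fb_s_proc} now satisfied, that lemma yields the claim.

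I do not expect a genuine obstacle here; this is a short verification rather than a substantial argument. The only points needing a little care are the dimensional bookkeeping — that $\Del = \del I_k$ makes $l = k$, so the Kronecker factorization is consistent — and the observation that the technical restriction $r \le 0$ is exactly what turns the $(2,2)$-block $rM$ into a negative semidefinite matrix, matching the convexity-related constraint \eqref{DSP::lem::eq::fb_s_proc_extra_dynamic} that was included in Lemma~\ref{DSP::lem::fb_s_proc} to cover genuinely dynamic (rather than parametric) uncertainties.
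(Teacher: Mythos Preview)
Your proposal is correct and follows essentially the same route as the paper: both reduce to Lemma~\ref{DSP::lem::fb_s_proc} and verify \eqref{DSP::lem::eq::fb_s_proc_lmic} via the Kronecker mixed-product identity $\smat{I_k \\ vI_k} = \smat{1 \\ v} \kron I_k$. Your write-up is in fact a little more thorough, since you also spell out that the $(2,2)$-block $rM$ satisfies \eqref{DSP::lem::eq::fb_s_proc_extra_dynamic} thanks to $r \le 0$ and $M \cg 0$, a point the paper leaves implicit.
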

	
	\begin{proof}
		We just have to observe that, by the rules of the Kronecker product,
		\begin{multline*}
			0 \cle \left(\mat{c}{1 \\ v}^{\!\ast}\! P_0\! \mat{c}{1 \\ v}\right) \kron M
			= (\bullet)^\ast (P_0 \kron M) \left( \mat{c}{1 \\v} \kron I_k\right)\\
			= (\bullet)^\ast (P_0 \kron M) \mat{c}{I \\vI_k}
			= (\bullet)^\ast (P_0 \kron M) \mat{c}{I \\ V}
		\end{multline*}
		holds for any $V \!=\!vI_k \in \Vb$ and any $P\!=\!P_0 \!\kron\! M \in \Pb(\Vb)$.
	\end{proof}
	
	Note that neither Lemma \ref{DSP::lem::fb_s_proc} nor Corollary \ref{DSP::coro::dynamic_rep_static} takes into account that the uncertainty $\Del$ entering \eqref{DSP::eq::sys} is an LTI system \eqref{DSP::eq::uncertainty}. Consequently, both analysis criteria can be rather conservative, as emphasized for example by \cite{PooTik95}.
	This can be resolved, e.g., by employing IQCs with \emph{dynamic} multipliers instead of the static ones used in Lemma~\ref{DSP::lem::fb_s_proc} and Corollary \ref{DSP::coro::dynamic_rep_static}.

	\section{Main Result}\label{DSP::sec::main}
	\newcommand{\mm}{P}
	
	Our main robustness result for the interconnection \eqref{DSP::eq::interconn} is based on the IQC theorem in  \citep{SchVee18} involving dynamic multipliers.
	To this end, we construct such multipliers described as $\Pi = \Psi^\ast\mm \Psi$ with a fixed stable dynamic outer factor $\Psi \in \rhi^{m\times (k+l)}$ and a real middle matrix $\mm \in \S^m$ which serves as a variable and is subject to suitable constraints.
	Moreover, let us recall the following notion introduced by \cite{SchVee18}, which involves some state-space description of the filter $\Psi$. In fact, the output $y$ of $\Psi$ in response to the input $u\in L_{2e}^{k+l}$ is supposed to be given by
	\begin{equation}
		\mat{c}{\dot \xi(t) \\ y(t)} = \mat{cc}{A_\Psi & B_\Psi \\ C_\Psi & D_\Psi} \mat{c}{\xi(t) \\ u(t)}
		\label{DSP::eq::filter}
	\end{equation}
	for $t\geq 0$ and with a zero initial condition, i.e., $\xi(0) = 0$.
	
	\begin{defi}\label{def4}
		The uncertainty $\Del: L_{2e}^{k} \to L_{2e}^l$ satisfies a finite-horizon IQC with terminal cost matrix $Z = Z^T$ for the multiplier $\Pi = \Psi^\ast \mm \Psi$ if the dissipation inequality
		\begin{equation*}
			\int_0^T y(t)^T \mm y(t) dt + \xi(T)^T Z \xi(T) \geq 0 \teq{ for all }T \geq 0
		\end{equation*}
		holds for the trajectories of the filter \eqref{DSP::eq::filter} driven by the input $u = \smat{z \\ \Del(z)}$ with any $z \in L_{2e}^k$.
	\end{defi}

	
	If $\Del\in \Delf(\Vb)$ satisfies a finite-horizon IQC with terminal cost $Z$, then standard arguments involving Parseval's theorem lead to the frequency domain inequality (FDI)
	\begin{equation*}
		\mat{c}{I \\ \Del(\io)}^\ast \Pi(\io) \mat{c}{I \\ \Del(\io)} \cge 0
		\teq{ for all }\om \in \R
	\end{equation*}
	since $\Psi$ and $\Del$ are assumed to be stable.
	Note that this is the frequency dependent analogue of \eqref{DSP::lem::eq::fb_s_proc_lmic}.
	A detailed analysis of links with the classical IQCs in \citep{MegRan97} (and the corresponding stability tests) can be found in \citep{SchVee18,Sch21}.
	In particular, this notion only involves a partial storage function defined by the filter's state and $Z$ and does not rely on any information about “internal properties” of $\Delta$.

	Now we address how to systematically construct finite-horizon IQCs with a nontrivial terminal cost for $\Delf(\Vb)$ and the value set \eqref{DSP::eq::concrete_value_set} inspired by (the proof of) Corollary~\ref{DSP::coro::dynamic_rep_static}.
	To this end, we pick a dynamic MIMO scaling $\psi^\ast M \psi$ with $\psi\in\rhi^{m_\psi\times k}$ and $M\in\S^{m_\psi}$ satisfying
	\begin{equation}
		\psi(\io)^\ast M \psi(\io) \cg 0
		\teq{ for all }\om \in \R \cup \{\infty \}.
		\label{DSP::eq::scaling}
	\end{equation}
	Then the concrete choice of $\Vb$ in \eqref{DSP::eq::concrete_value_set} naturally implies
	\begin{equation*}
		\mat{c}{1 \\ \del}^{\!\ast}\!\! P_0\! \mat{c}{1 \\ \del}\,\kron\,  \psi^\ast M\psi  \cge 0
		\teq{on}i\R
	\end{equation*}
	for any $\Del =\del I_k\in \Delf(\Vb)$.
	Hence, the identity
	\begin{subequations}
		\label{DSP::eq::motivating_identity}
		\begin{equation}
			\mat{c}{1 \\ \del} \kron \psi
			=\mat{c}{\psi \\ \del \psi}
			=\mat{c}{\psi \\ \psi \del}
			= \mat{cc}{\psi & 0 \\ 0 & \psi }\mat{c}{I \\ \del I_k}
		\end{equation}
		on the imaginary axis 
		yields the inequality
		\begin{equation}\label{cws1}
			\begin{aligned}
				0 &\cle \mat{c}{1 \\ \del}^{\!\ast}\!\! P_0\! \mat{c}{1 \\ \del}\kron  \psi^\ast M\psi
				= (\bullet)^\ast (P_0 \!\kron\! M) \left(\mat{c}{1 \\ \del}\! \kron\! \psi\! \right)\\
				&= (\bullet)^\ast \!\underbrace{\mat{cc}{\psi \!& 0 \\ 0 & \psi}^{\!\ast}\!\!\big(P_0\! \kron \!M \big)\! \mat{cc}{\psi \!& 0 \\ 0 & \psi}\!}_{=: \Pi}\!\mat{c}{I \\ \delta I_k}
				= (\bullet)^\ast \Pi \mat{c}{I \\ \Del}
			\end{aligned}
		\end{equation}
	\end{subequations}
	on $i\R$ for any $\Del =\del I_k\in \Delf(\Vb)$.
	Observe that this a typical S-procedure argument, but now much less commonly applied in the context of frequency domain inequalities. In particular, the dynamic scaling $\psi^\ast M \psi$ plays the role of the Lagrange multiplier appearing in such arguments.
	
	%

	This brings us to the main technical result of this paper, a lossless time-domain formulation of the FDI \eqref{cws1}, which is proved (in the appendix) by dissipativity arguments only.

	\begin{theo}
		\label{DSP::theo::finite_horizon_iqc}
		Suppose that $\psi\in\rhi^{m_\psi\times k}$ has the realization $(A_\psi, B_\psi, C_\psi, D_\psi)$ where $A_\psi \in \R^{n_\psi \times n_\psi}$ is Hurwitz. If
		\begin{equation}
			\label{DSP::theo::eq::positivity}
			(\bullet)^T\mat{cc}{0 & Y \\ Y & 0} \mat{cc}{I & 0 \\ A_\psi & B_\psi} + (\bullet)^T M \mat{cc}{C_\psi & D_\psi} \cg 0
		\end{equation}
		holds for $Y\in \S^{n_\psi}$ and $M\in \S^{m_\psi}$, then any $\Del \in \Delf(\Vb)$ satisfies a finite-horizon IQC with terminal cost matrix $P_0 \kron Y$ for $\Pi = \Psi^\ast(P_0 \kron M)\Psi$ where $\Psi := \diag(\psi, \psi)$.
	\end{theo}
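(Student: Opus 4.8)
\emph{Proof strategy.} The plan is to derive, along the trajectories of the filter $\Psi=\diag(\psi,\psi)$ driven by $u=\smat{z\\\Del(z)}$ with $\xi(0)=0$, one pointwise‑in‑time algebraic identity, and then to integrate it. Write $\Del=\del I_k$, $w:=\Del(z)$, and split the state of $\Psi$ as $\xi=\col(\xi_1,\xi_2)$, where $\dot\xi_1=A_\psi\xi_1+B_\psi z$, $\dot\xi_2=A_\psi\xi_2+B_\psi w$, $\xi_1(0)=\xi_2(0)=0$; correspondingly $y=\col(y_1,y_2)$ with $y_1=C_\psi\xi_1+D_\psi z$ and $y_2=C_\psi\xi_2+D_\psi w$. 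Let $\Theta\in\S^{n_\psi+k}$ denote the positive definite matrix on the left of \eqref{DSP::theo::eq::positivity}, so that $\tfrac{d}{dt}(\xi_1^TY\xi_1)+y_1^TMy_1=\col(\xi_1,z)^T\Theta\,\col(\xi_1,z)$, and likewise with the index $2$ and $z$ replaced by $w$. The first step is to check the identity
\begin{equation*}
	\tfrac{d}{dt}\!\big(\xi^T(P_0\kron Y)\xi\big)+y^T(P_0\kron M)y=\mat{c}{a\\b}^T(P_0\kron\Theta)\mat{c}{a\\b},\qquad a:=\col(\xi_1,z),\ b:=\col(\xi_2,w).
\end{equation*}
With $P_0=\smat{q&s\\s&r}$ this is a routine expansion: the $q$‑ and $r$‑blocks are the two instances of the dissipation form for $\psi$ above, and the $s$‑block is the polarization $\tfrac{d}{dt}(\xi_1^TY\xi_2)+y_1^TMy_2=\col(\xi_1,z)^T\Theta\,\col(\xi_2,w)$, which holds because $\Theta$ is symmetric.

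Since $\Theta\succ0$, I would factor $\Theta=N^TN$ with $N$ square invertible and put $\alpha:=Na$, $\beta:=Nb$; using $P_0\kron N^TN=(I_2\kron N)^T(P_0\kron I)(I_2\kron N)$, the right‑hand side above becomes $\smat{\alpha\\\beta}^T(P_0\kron I)\smat{\alpha\\\beta}=\sum_{j}\smat{\alpha_j\\\beta_j}^TP_0\smat{\alpha_j\\\beta_j}$, the sum running over the $n_\psi+k$ scalar components. The key structural point is that $b$ is the entrywise image of $a$ under $\del$: all initial conditions vanish and $\del$ is scalar, so $\xi_2$ is obtained from $\xi_1$ and $w$ from $z$ by applying $\del$ componentwise (the scalar convolution by $\del$ commutes with the matrix‑valued convolution kernels of $\psi$), and left‑multiplication by the constant matrix $N$ preserves this; hence $\beta_j=\del(\alpha_j)$ with zero initial condition for every $j$. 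Integrating the identity over $[0,T]$ and using $\xi(0)=0$ yields
\begin{equation*}
	\int_0^T y(t)^T(P_0\kron M)y(t)\,dt+\xi(T)^T(P_0\kron Y)\xi(T)=\sum_{j}\int_0^T\mat{c}{\alpha_j(t)\\\beta_j(t)}^TP_0\mat{c}{\alpha_j(t)\\\beta_j(t)}dt,
\end{equation*}
so the claim reduces to showing that each summand on the right is nonnegative for all $T\ge0$.

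This last step is where the hypothesis $r\le0$ built into \eqref{DSP::eq::concrete_value_set} enters, and it is the only non‑algebraic point. Fix $T$ and an index $j$, let $\t\alpha:=P_T\alpha_j\in L_2$ be the truncation of $\alpha_j$ to $[0,T]$ (equal to $\alpha_j$ there, zero afterwards), and write $\langle\cdot,\cdot\rangle$ for the $L_2$ inner product. By causality of $\del$ one has $P_T\beta_j=P_T(\del\t\alpha)$, hence $\int_0^T\alpha_j^2=\|\t\alpha\|^2$, $\int_0^T\alpha_j\beta_j=\langle\t\alpha,\del\t\alpha\rangle$ and $\int_0^T\beta_j^2=\|P_T(\del\t\alpha)\|^2\le\|\del\t\alpha\|^2$. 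Since $r\le0$ and $\|P_T(\del\t\alpha)\|\le\|\del\t\alpha\|$,
\begin{equation*}
	\int_0^T\mat{c}{\alpha_j\\\beta_j}^TP_0\mat{c}{\alpha_j\\\beta_j}dt\;\ge\;\int_0^\infty\mat{c}{\t\alpha(t)\\(\del\t\alpha)(t)}^TP_0\mat{c}{\t\alpha(t)\\(\del\t\alpha)(t)}dt\;\ge\;0,
\end{equation*}
the last inequality being the full‑horizon counterpart of \eqref{DSP::eq::uncertainty_set}: by the KYP lemma the value‑set constraint \eqref{DSP::eq::concrete_value_set} amounts to a nonstrict dissipation inequality for $\del$ with some storage matrix $X_\del$, and integrating it over $[0,T']$ and letting $T'\to\infty$—with $A_\del$ Hurwitz and $\t\alpha\in L_2$ forcing $x_\del(T')\to0$—gives $\int_0^\infty(\bullet)^TP_0(\bullet)\ge0$. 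Summing over $j$ concludes the proof.

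I expect two places to demand care. First, pushing the Kronecker‑and‑polarization bookkeeping through the identity so that the single matrix $\Theta$ (equivalently the single pair $Y,M$) really appears in all three blocks. Second, the finite‑horizon positivity of the scalar terms: a direct appeal to the full‑horizon inequality is not available because $t\mapsto\int_0^t(\bullet)^TP_0(\bullet)$ need not be monotone, so the truncation step combined with $r\le0$ is indispensable—consistently with the remark after Lemma~\ref{DSP::lem::fb_s_proc} that $r\le0$ is exactly what makes \eqref{DSP::lem::eq::fb_s_proc_extra_dynamic} hold for the chosen multiplier.
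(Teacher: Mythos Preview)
Your argument is correct, and its backbone coincides with the paper's: both proofs establish the same pointwise identity
\[
\tfrac{d}{dt}\big(\xi^T(P_0\kron Y)\xi\big)+y^T(P_0\kron M)y=\t y^T(P_0\kron K)\t y,\qquad \t y:=\col(\t y_1,\t y_2),\ \t y_i:=\col(\xi_i,u_i),
\]
with $K$ the left-hand side of \eqref{DSP::theo::eq::positivity} (your $\Theta$), and both hinge on the commutation fact $\t y_2=(\del I_d)(\t y_1)$ coming from Lemma~\ref{DSP::lem::commutation}. The difference lies entirely in how $\int_0^T\t y^T(P_0\kron K)\t y\,dt\ge 0$ is shown. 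The paper stays at the matrix level: it applies the KYP lemma to the scalar FDI for $\del$ to obtain a certificate $W$, uses $r\le 0$ together with $A_\del$ Hurwitz to conclude $W\cle 0$, takes the Kronecker product of the two LMIs to get a single dissipation inequality with storage $\eta(t)=\zeta(t)^T(W\kron K)\zeta(t)$ for the state $\zeta$ of $\del I_d$, and then reads off $\int_0^T(\cdot)\ge -\eta(T)\ge 0$ from $W\cle 0$, $K\cg 0$. You instead Cholesky-factor $K=N^TN$, reduce to scalar channels $\beta_j=\del(\alpha_j)$, and handle each channel by truncation plus the full-horizon IQC for $\del$; here $r\le 0$ enters through the truncation step rather than through the sign of $W$. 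What each buys: the paper's route produces an explicit quadratic storage $\zeta^T(W\kron K)\zeta$ for the combined filter--uncertainty state, which dovetails with the dissipativity viewpoint stressed throughout and with the ellipsoidal-invariance remark after Theorem~\ref{DSP::theo::single_repeated}; your route is arguably more elementary, isolates exactly where $r\le 0$ is consumed, and avoids the Kronecker-of-LMIs step altogether.
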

	
	Note that the LMI \eqref{DSP::theo::eq::positivity} results from applying the KYP lemma to the FDI \eqref{DSP::eq::scaling}. Moreover, $P_0 \kron Y$ encodes how the information about the uncertainty is affecting the IQC.

	We stress that this result can also be found in \citep{Sch21}. However, there the proof heavily
	relies on \emph{frequency domain} arguments and, in particular, on the following commutation property
	for all $\om\in\R\cup\{\infty\}$:
	\begin{equation}
		\label{DSP::eq::commutation}
		\psi(\io) \Del(\io) = \psi(\io) \del(\io) =  \del(\io)\psi(\io).
	\end{equation}
	We only make use of this property in \eqref{DSP::eq::motivating_identity} for the purpose of motivation.
	In contrast and as a major benefit, the new proof in this paper only employs \emph{time domain} arguments. This means that it can be extended
	, e.g., to hybrid systems where no suitable notion of a frequency domain exists 
	\citep{HolSch19b}.
	Naturally, the proof evolves around a time domain analogue of \eqref{DSP::eq::commutation} that is explicitly stated in Lemma~\ref{DSP::lem::commutation} and relies on Fubini's theorem.

	\vspace{1ex}

	With Theorem \ref{DSP::theo::finite_horizon_iqc} at hand, we can now formulate our main robust stability test for the interconnection \eqref{DSP::eq::interconn}. This is a consequence of the general IQC robust stability result as stated in Theorem 4 of \cite{SchVee18}.

	\begin{theo}
		\label{DSP::theo::single_repeated}
		Let $\smat{A_\psi & B_\psi \\ C_\psi & D_\psi} \in \R^{(n_\psi+m_\psi) \times (n_\psi + k)}$ with stable $A_\psi$ be given and abbreviate $J_\Psi := \diag(J_\psi, J_\psi)$ for $J \in \{A, B, C, D\}$. Then the interconnection \eqref{DSP::eq::interconn} with $\Vb$ as in \eqref{DSP::eq::concrete_value_set} is robustly stable if there exist $X\in \S^{2n_\psi+n}$, $Y\in \S^{n_\psi}$ and $M \in \S^{m_\psi}$ satisfying
		\eqref{DSP::theo::eq::positivity},
		\begin{subequations}
			\begin{equation}
				(\bullet)^T \mat{cc|c}{0 & X & \\ X & 0 & \\ \hline && P_0 \kron M} \mat{cc|c}{I& 0 & 0 \\ 0& I & 0 \\ A_\Psi & B_\Psi \smat{C \\ 0} & B_\Psi \smat{D \\ I_k} \\ 0 & A & B \\ \hline C_\Psi & D_\Psi \smat{C \\ 0} & D_\Psi \smat{D \\ I_k}} \cl 0
				\label{DSP::theo::eq::main}
			\end{equation}
			and
			\begin{equation}
				X - \mat{cc}{P_0 \kron Y & 0 \\ 0& 0} \cg 0.
				\label{DSP::theo::eq::pos}
			\end{equation}
		\end{subequations}
	\end{theo}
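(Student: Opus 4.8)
The plan is to verify that the three displayed LMIs place us exactly in the setting of the general IQC robust stability theorem of \cite{SchVee18} (Theorem~4 there), with the required finite-horizon IQC supplied by Theorem~\ref{DSP::theo::finite_horizon_iqc}. Since $A_\psi$ is Hurwitz and \eqref{DSP::theo::eq::positivity} holds for the given $Y$ and $M$, that theorem applies and shows that every $\Del\in\Delf(\Vb)$ satisfies a finite-horizon IQC with terminal cost matrix $P_0\kron Y$ for the multiplier $\Pi=\Psi^\ast(P_0\kron M)\Psi$, $\Psi=\diag(\psi,\psi)$, realized by $(A_\Psi,B_\Psi,C_\Psi,D_\Psi)$.

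The next step is to identify \eqref{DSP::theo::eq::main} as a dissipation inequality for the cascade of the filter~\eqref{DSP::eq::filter}, driven by $u=\smat{z\\w}$, with the plant~\eqref{DSP::eq::sys} after the substitution $z=Cx+Dw$. In the stacked state $\zeta:=\smat{\xi\\x}$ this cascade reads $\dot\zeta=\Ac\zeta+\Bc w$, $y=\Cc\zeta+\Dc w$ with $\Ac=\smat{A_\Psi & B_\Psi\smat{C\\0}\\0 & A}$, $\Bc=\smat{B_\Psi\smat{D\\I_k}\\B}$, $\Cc=\smat{C_\Psi & D_\Psi\smat{C\\0}}$ and $\Dc=D_\Psi\smat{D\\I_k}$; a direct inspection of the block pattern shows that \eqref{DSP::theo::eq::main} is nothing but $(\bullet)^T\smat{0 & X\\X & 0}\smat{I & 0\\\Ac & \Bc}+(\bullet)^T(P_0\kron M)\smat{\Cc & \Dc}\cl 0$. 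By strictness there is then an $\eps>0$ such that $\frac{d}{dt}(\zeta^TX\zeta)+y^T(P_0\kron M)y\le-\eps(\|\zeta\|^2+\|w\|^2)$ holds along all trajectories of the cascade.

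To close the argument I would plug in an arbitrary $\Del\in\Delf(\Vb)$, so that $w=\Del(z)$ and the filter input becomes $u=\smat{z\\\Del(z)}$; then $\xi$ and $y$ are exactly those appearing in Definition~\ref{def4}. Integrating the dissipation inequality over $[0,T]$, using $\xi(0)=0$ (so $\zeta(0)=\smat{0\\x(0)}$ and $\zeta(0)^TX\zeta(0)=x(0)^TX_{22}x(0)$ with $X_{22}$ the lower-right $n\times n$ block of $X$), and discarding the term $\int_0^Ty^T(P_0\kron M)y\,dt\ge-\xi(T)^T(P_0\kron Y)\xi(T)$ by virtue of the finite-horizon IQC yields
\[\zeta(T)^T\Big(X-\smat{P_0\kron Y & 0\\0 & 0}\Big)\zeta(T)+\eps\int_0^T\big(\|\zeta\|^2+\|w\|^2\big)\,dt\le x(0)^TX_{22}x(0).\]
By \eqref{DSP::theo::eq::pos} the first term is nonnegative, which simultaneously yields a uniform bound $\|x(t)\|\le M_0\|x(0)\|$ for some $M_0>0$ and $x,w\in L_2$; since then $\dot x=Ax+Bw\in L_2$ as well, $x(t)\to 0$ follows. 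Well-posedness ($\det(I-DV)\neq 0$ for all $V\in\Vb$) is extracted from the feedthrough block of \eqref{DSP::theo::eq::main} together with the multiplier inequality behind Theorem~\ref{DSP::theo::finite_horizon_iqc} in the standard way; equivalently, the conditions assembled above are precisely the hypotheses of Theorem~4 of \cite{SchVee18}, which delivers robust stability at once.

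The one genuinely delicate point is the block computation of the previous paragraph, that is, checking that the large block array in \eqref{DSP::theo::eq::main} really collapses to the stated dissipation inequality of the cascade; this is careful but routine bookkeeping exploiting the block-diagonal structure $\Psi=\diag(\psi,\psi)$ and the substitution $z=Cx+Dw$. Turning the $L_2$-bounds into genuine asymptotic stability, and establishing well-posedness, are exactly what the cited IQC theorem packages, so I would rely on it rather than reprove these facts.
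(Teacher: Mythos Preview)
Your proposal is correct and follows exactly the route the paper takes: it invokes Theorem~\ref{DSP::theo::finite_horizon_iqc} to obtain the finite-horizon IQC with terminal cost $P_0\kron Y$, reads \eqref{DSP::theo::eq::main} as the KYP/dissipation LMI for the filter--plant cascade, and then applies Theorem~4 of \cite{SchVee18} with the coupling condition \eqref{DSP::theo::eq::pos}. The paper gives no further details beyond this citation, so your sketch in fact unpacks more than the paper does, and the trajectory inequality you derive coincides with the one the paper records immediately after the theorem.
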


	\vspace{1ex}
	
	First note that we recover Corollary \ref{DSP::coro::dynamic_rep_static} by choosing $n_\psi = 0$ and $D_\psi = I_k$, i.e., by restricting the filter \eqref{DSP::eq::filter} to be static and of the form $\Psi = I_{k+l}$. We also recover the analysis results from \cite{SchKoe12} with $P_0 = \smat{1 & 0 \\ 0& -1}$; then $\Delf(\Vb)$ is the set of repeated dynamic uncertainties whose $H_\infty$-norm is bounded by one. Further restricting $Y$ to $Y = 0$ and replacing \eqref{DSP::theo::eq::positivity} with $M\cg 0$ leads back to the stability criteria proposed by \cite{Bal02}. Since the latter test relies on a vanishing terminal cost, it is typically more conservative as seen in \citep{Sch21}.
	
	Let us recall the benefit of the time-domain formulation in 
	Theorem~\ref{DSP::theo::single_repeated} from \citep{FetSch18}. In fact, the underlying dissipativity proof even permits us to conclude
	\begin{equation*}
		\arraycolsep=2pt
		\mat{c}{\xi(T) \\ x(T)}^{\!T}\!\! \left(\!X \!-\! \mat{cc}{P_0 \kron Y & 0 \\ 0 & 0} \!\right)\!\mat{c}{\xi(T) \\ x(T)} \!\leq\! \mat{c}{0 \\ x(0)}^{\!T}\!\! X\! \mat{c}{0 \\ x(0)}
	\end{equation*}
	for all $T\geq 0$ and along any trajectory of the filter \eqref{DSP::eq::filter} driven by any response $u = \smat{z \\w}$ of the uncertain interconnection \eqref{DSP::eq::interconn}. 
	This allows us to define ellipsoids that are robustly invariant sets for all system trajectories, which paves the way for regional robust stability analysis.
	
	For more comments and an extension to uncertain interconnections with a performance channel, we refer to \cite{Sch21}. We emphasize at this point that Theorem~\ref{DSP::theo::single_repeated} provides an exact test, i.e., it is necessary and sufficient, if, roughly speaking, the filter $\psi$ is chosen such that the set $\{\psi^\ast M \psi: M = M^T\}$ constitutes a sufficiently large subspace of $\rli^{k\times k}$ \citep{ChoTit99}.

	\section{Variations}\label{DSP::sec::variations}
	
	The flexibility of the approach leading to Theorem \ref{DSP::theo::finite_horizon_iqc} and Theorem \ref{DSP::theo::single_repeated} permits us to adequately treat other types of dynamic uncertainties with minor adjustments of the involved arguments only. This is illustrated next.

	\subsection{Dynamic Full-Block Uncertainties}
	
	Suppose that the interconnection \eqref{DSP::eq::interconn} involves dynamic full-block uncertainties, i.e., uncertainties in $\Delf(\Vb)$ with
	\begin{equation}
		\label{DSP::eq::concrete_value_set2}
		\Vb := \left\{V\in \C^{l \times k} ~\middle|~ \mat{c}{I_k \\ V}^\ast P_0 \mat{c}{I_k \\ V} \cge 0 \right\}
	\end{equation}
	for some given matrix $P_0 = \smat{Q & S \\S^T & R}\in \S^{k+l}$ with ${R \cle 0}$.
	%
	%
	In order to construct a corresponding finite-horizon IQC with terminal cost, this time, we pick a dynamic SISO scaling $\psi^\ast M \psi$ with $\psi\in\rhi^{m_\psi\times 1}$ and $M\in\S^{m_\psi}$ satisfying $\psi(\io)^\ast M \psi(\io) > 0$ for all $\om\in\R\cup\{\infty\}$.
	Then clearly
	\begin{equation*}
		\mat{c}{I_k \\ \Del}^\ast P_0 \mat{c}{I_k \\ \Del} \,\kron\,  \psi^\ast M\psi  \cge 0
		\teq{on}i\R
	\end{equation*}
	holds for any $\Del \in \Delf(\Vb)$, since the value set $\Vb$ is taken as in \eqref{DSP::eq::concrete_value_set2}.
	Next, we proceed analogously as in \eqref{DSP::eq::motivating_identity} and observe that the identity
	\begin{multline*}
		\mat{c}{I_k \\ \Del} \kron \psi
		=\mat{c}{I_k \kron \psi \\ \Del \kron \psi}
		=\mat{c}{I_k \kron \psi \\ (I_l \kron \psi) (\Del \kron 1)} \\
		= \mat{cc}{I_k \kron \psi & 0 \\ 0 & I_l \kron \psi }\mat{c}{I_k \\ \Del}
	\end{multline*}
	on the imaginary axis yields the inequality
	\begin{equation*}
		\begin{aligned}
			0 &\cle \mat{c}{I_k \\ \Del}^{\!\ast}\!\! P_0\! \mat{c}{I_k \\ \Del}\kron  \psi^\ast M\psi
			= (\bullet)^\ast (P_0 \!\kron\! M) \left(\mat{c}{I_k \\ \Del}\! \kron\! \psi\! \right)\\
			&= (\bullet)^\ast \!\underbrace{\mat{cc}{I_k \kron \psi \!& 0 \\ 0 & I_l \kron \psi}^{\!\ast}\!\!\big(P_0\! \kron \!M \big)\! \mat{cc}{I_k \kron \psi \!& 0 \\ 0 & I_l \kron \psi}\!}_{=: \Pi}\!\mat{c}{I_k \\ \Del}
		\end{aligned}
	\end{equation*}
	on $i\R$ for any $\Del \in \Delf(\Vb)$.
	This constitutes again an S-procedure argument and motivates the following robust stability test for the interconnection \eqref{DSP::eq::interconn}.

	\begin{theo}
		Let $\smat{A_\psi & B_\psi \\ C_\psi & D_\psi} \in \R^{(n_\psi+m_\psi) \times (n_\psi + 1)}$ with stable $A_\psi$ be given and abbreviate
		$J_\Psi := \diag(I_k \kron J_\psi, I_l \kron J_\psi)$ for $J \in \{A, B, C, D\}$. Then the interconnection \eqref{DSP::eq::interconn} with $\Vb$ as in \eqref{DSP::eq::concrete_value_set2} is robustly stable if there exist $X\in \S^{(k+l)n_\psi + n}$, $Y\in \S^{n_\psi}$ and $M\in \S^{m_\psi}$ satisfying
		\eqref{DSP::theo::eq::positivity}, \eqref{DSP::theo::eq::main} and \eqref{DSP::theo::eq::pos}.
	\end{theo}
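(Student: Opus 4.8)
The plan is to follow the two-step pattern behind Theorem~\ref{DSP::theo::single_repeated}. First I would establish the full-block counterpart of Theorem~\ref{DSP::theo::finite_horizon_iqc}: if \eqref{DSP::theo::eq::positivity} holds for $Y\in\S^{n_\psi}$ and $M\in\S^{m_\psi}$, then every $\Del\in\Delf(\Vb)$ with $\Vb$ as in \eqref{DSP::eq::concrete_value_set2} satisfies a finite-horizon IQC with terminal cost matrix $P_0\kron Y$ for the multiplier $\Pi=\Psi^\ast(P_0\kron M)\Psi$, where the outer factor $\Psi=\diag(I_k\kron\psi, I_l\kron\psi)=I_{k+l}\kron\psi$ carries the realization $I_{k+l}\kron(A_\psi,B_\psi,C_\psi,D_\psi)$. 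Granting this, the robust stability claim follows word for word as in the proof of Theorem~\ref{DSP::theo::single_repeated}: one feeds this IQC together with \eqref{DSP::theo::eq::main}, which is exactly the strict dissipation inequality for the series interconnection of \eqref{DSP::eq::interconn} with $\Psi$ (storage $\col(\xi,x)^T X\col(\xi,x)$, supply rate $-\,y^T(P_0\kron M)y$), and the positivity condition \eqref{DSP::theo::eq::pos} into Theorem~4 of \citep{SchVee18}; condition \eqref{DSP::theo::eq::pos} is precisely what lets the terminal-cost term from the IQC be absorbed into the certificate $X$, thereby producing boundedness and decay of $\col(\xi(t),x(t))$ together with well-posedness.

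For the finite-horizon IQC itself I would argue purely by dissipativity, as announced for Theorem~\ref{DSP::theo::finite_horizon_iqc}. Relaxing \eqref{DSP::theo::eq::positivity} to a non-strict inequality, its left-hand side, call it $\Phi$, is positive semidefinite and hence factors as $\Phi=L^TL$. Let $\zeta_i$ and $\eta_i=C_\psi\zeta_i+D_\psi u_i$ denote the state and output of the $i$-th scalar copy of $\psi$ inside $\Psi$ driven by the $i$-th component $u_i$ of the filter input $u=\col(z,\Del(z))$, so that the state and output of $\Psi$ are $\xi=\col(\zeta_1,\dots,\zeta_{k+l})$ and $y=\col(\eta_1,\dots,\eta_{k+l})$. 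Writing $P_0=\smat{Q & S\\ S^T & R}$ and expanding the Kronecker structure, a direct computation gives
\begin{equation*}
	\frac{d}{dt}\big(\xi^T(P_0\kron Y)\xi\big)+y^T(P_0\kron M)y=\sum_{i,j}(P_0)_{ij}\mat{c}{\zeta_i\\u_i}^T\Phi\mat{c}{\zeta_j\\u_j}=\mathrm{tr}\big(\Phi\,WP_0W^T\big),
\end{equation*}
where $W$ is the matrix whose $i$-th column is $\col(\zeta_i,u_i)$. Integrating from $0$ to $T$ and using $\xi(0)=0$ reduces the claim to the single estimate $\int_0^T\mathrm{tr}\big(\Phi\,W(t)P_0W(t)^T\big)\,dt\geq0$.

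The decisive ingredient is a time-domain commutation property, the full-block analogue of Lemma~\ref{DSP::lem::commutation}. Because $\psi$ is a column (SISO), componentwise convolution by $\psi$ commutes with the matrix convolution operator $\Del$, so by Fubini's theorem the state of $\psi$ driven by the $j$-th component of $\Del(z)$ equals $\sum_{i=1}^k\Del_{ji}(\zeta_i)$, and similarly $\eta_{k+j}=\sum_{i=1}^k\Del_{ji}(\eta_i)$. Consequently, if $P$ denotes the matrix with columns $\col(\zeta_i,z_i)$, $i=1,\dots,k$, the last $l$ columns of $W$ arise by applying $\Del$ to the rows of $P$; since the constant matrix $L$ commutes with this rowwise action and $\Phi=L^TL$, one obtains
\begin{equation*}
	\mathrm{tr}\big(\Phi\,WP_0W^T\big)=\mathrm{tr}\big((LW)P_0(LW)^T\big)=\sum_{r}\mat{c}{a_r\\\Del(a_r)}^T P_0\mat{c}{a_r\\\Del(a_r)},
\end{equation*}
where $a_r\in L_{2e}^k$ is the $r$-th row of $LP$. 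Now, for each $r$ and each horizon $T$, causality of $\Del$ lets one replace $a_r$ by its truncation to $[0,T]$ without changing the integral over $[0,T]$; the discarded tail equals $\int_T^\infty\Del((a_r)_T)^T R\,\Del((a_r)_T)\,dt$, which is nonpositive because $R\cle0$; and the remaining infinite-horizon integral is nonnegative because $\Del(\io)\in\Vb$ means precisely $\smat{I_k \\ \Del(\io)}^\ast P_0\smat{I_k \\ \Del(\io)}\cge0$ on $i\R$, which by Parseval's theorem amounts to that $L_2$-integral being $\geq0$. Summing over $r$ delivers the required estimate.

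I expect the commutation step to be the main obstacle. One has to make precise, in the time domain and with zero initial conditions, that filtering $\Del(z)$ through the bank of scalar copies of $\psi$ produces exactly the same internal states and outputs as first filtering $z$ and then applying $\Del$ componentwise; this is where the SISO assumption on $\psi$ and Fubini's theorem are indispensable, and where the frequency-domain manipulations around \eqref{DSP::eq::commutation} used in \citep{Sch21} are replaced by a convolution argument. A secondary subtlety is the interaction of the finite horizon with the sign condition $R\cle0$: it is exactly $R\cle0$ that permits discarding the nonpositive tail of the truncated signal so that one can fall back on the infinite-horizon frequency domain inequality for $\Del$; for a parametric (constant) $\Del$ that tail vanishes identically and the hypothesis $R\cle0$ becomes superfluous, in line with the remark following \eqref{DSP::eq::concrete_value_set}.
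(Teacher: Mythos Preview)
Your proposal is correct. The overall architecture—establish a finite-horizon IQC with terminal cost $P_0\kron Y$ for $\Pi=\Psi^\ast(P_0\kron M)\Psi$, then invoke Theorem~4 of \citep{SchVee18} together with \eqref{DSP::theo::eq::main} and \eqref{DSP::theo::eq::pos}—matches the paper's intended route exactly, and your use of the time-domain commutation (each scalar copy of $\psi$ commutes with the scalar entries $\Del_{ji}$, so the last $l$ columns of $W$ are obtained from the first $k$ by the action of $\Del$) is precisely the full-block analogue of Lemma~\ref{DSP::lem::commutation}. Your trace identity $\frac{d}{dt}\xi^T(P_0\kron Y)\xi+y^T(P_0\kron M)y=\mathrm{tr}(\Phi W P_0 W^T)$ is the full-block counterpart of \eqref{DSP::eq::K_signal_expression}.

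Where you depart from the paper is in the very last step. Having reduced the task to showing $\int_0^T\sum_r\smat{a_r\\\Del(a_r)}^{\!T} P_0\smat{a_r\\\Del(a_r)}\,dt\ge 0$, you close the argument by truncating $a_r$ to $[0,T]$, discarding the tail via $R\cle 0$, and applying Parseval's theorem to the resulting infinite-horizon integral. The paper instead stays entirely in state space: it applies the KYP lemma to the FDI $\smat{I_k\\\Del}^{\!*}P_0\smat{I_k\\\Del}\cge 0$ to obtain a certificate $W\cle 0$ and a matrix inequality analogous to \eqref{DSP::pro::eq::siso_kyp_lmi}, takes the Kronecker product of that inequality with $K$ (your $\Phi$, the left-hand side of \eqref{DSP::theo::eq::positivity}) to get the analogue of \eqref{DSP::pro::eq::siso_kyp_lmi2}, and then runs a pure dissipation argument with the storage $\zeta^T(W\kron K)\zeta$ to obtain the counterpart of \eqref{DSP::eq::K_inequality}. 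Your route is arguably more elementary—no KYP step, no tensoring of two LMIs, and the factorization $\Phi=L^TL$ together with the rowwise action of $\Del$ is a clean way to expose the quadratic forms—but the Parseval step is a genuine frequency-domain argument. The paper's point is precisely to avoid this and to argue through a storage function for $\Del$, which is what makes the proof portable to hybrid or time-varying settings where Parseval is unavailable. Both approaches hinge on the same two structural ingredients: the commutation property (Lemma~\ref{DSP::lem::commutation}) for the dynamic part, and $R\cle 0$ for the sign control (you use it to bound the truncation tail; the paper uses it, together with Hurwitz $A_\Del$, to conclude $W\cle 0$).
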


	\subsection{Dynamic Repeated Uncertainties in Intersections of Disks and Half-Planes}

	Next, let us suppose again that the interconnection \eqref{DSP::eq::interconn} involves a dynamic repeated uncertainty, but now with its Nyquist plot being contained in the value set
	\begin{equation}
		\label{DSP::eq::concrete_value_set3}
		\Vb  := \left\{v I_k ~\middle|~ v \!\in\! \C \text{, } (\bullet)^\ast P_i\! \mat{c}{1 \\ v}\! \geq\! 0
		\text{ for }i = 1, \dots, \nu \right\}
	\end{equation}
	for given matrices $P_1, \dots, P_\nu \in \S^2$ with nonpositive $(2,2)$ entries.
	Note that $\Vb$ is an intersection of discs and half-planes in $\C$, which permits us to capture joint gain- and phase constraints on the uncertainties $\Delta = \delta I_k \in\Delf(\Vb)$.

	A corresponding finite-horizon IQC with terminal cost is motivated by the following observation. If recalling \eqref{DSP::eq::motivating_identity} for each summand, we infer that
	\begin{multline*}
		\arraycolsep=1pt
		0 \cle \sum_{i = 1}^\nu \mat{c}{1 \\ \!\del\!}^{\!\ast}\! P_i \!\mat{c}{1 \\ \!\del\!}\kron\psi^\ast \!M_i \psi\!\\
		=  \mat{c}{I_k \\ \Del}^\ast\!\underbrace{\mat{cc}{\psi & 0 \\ 0 & \psi}^\ast \left(\sum_{i = 1}^\nu P_i \kron M_i\right)\! \mat{cc}{\psi & 0 \\ 0 & \psi}\!}_{=: \Pi} \mat{c}{I_k \\ \Del}
	\end{multline*}
	holds on the imaginary axis for any $\Del =\del I_k\in \Delf(\Vb)$ and all
	dynamic scalings $\psi^\ast M_1 \psi, \dots, \psi^\ast M_\nu \psi$
	which are positive definite on $i\R \cup \{\infty \}$.
	Therefore, by repeating the arguments leading to Theorem \ref{DSP::theo::single_repeated} almost verbatim, we arrive at the following robustness result.
	
	\begin{theo}
		\label{DSP::theo::single_repeated2}
		Let $J_\psi$ and $J_\Psi$ for $J \in \{A, B, C, D\}$ be given as in Theorem~\ref{DSP::theo::single_repeated}. Then the interconnection \eqref{DSP::eq::interconn} with $\Vb$ as in \eqref{DSP::eq::concrete_value_set3} is robustly stable if there exist $X\in \S^{2n_\psi+n}$, $Y_1, \dots, Y_\nu\in \S^{n_\psi}$ and $M_1, \dots, M_\nu \in \S^{m_\psi}$ satisfying
		\begin{equation*}
			(\bullet)^T\mat{cc}{0 & Y_i \\ Y_i & 0} \mat{cc}{I & 0 \\ A_\psi & B_\psi} + (\bullet)^T M_i \mat{cc}{C_\psi & D_\psi} \cg 0%
		\end{equation*}
		for all $i = 1, \dots, \nu$ as well as \eqref{DSP::theo::eq::main} and \eqref{DSP::theo::eq::pos} with $ P_0 \kron M$ and $ P_0 \kron Y$ respectively replaced by
		\begin{equation*}
			\sum_{i = 1}^\nu P_i \kron M_i
			\teq{ and }
			\sum_{i = 1}^\nu P_i \kron Y_i.
		\end{equation*}
	\end{theo}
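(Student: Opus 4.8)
The plan is to follow the proof of Theorem~\ref{DSP::theo::single_repeated} almost verbatim, the only genuine novelty being that the dynamic multiplier and its terminal cost are now assembled by \emph{superposition} of $\nu$ elementary contributions, one for each matrix $P_1,\dots,P_\nu$. Concretely, I would first build a single finite-horizon IQC with terminal cost for $\Delf(\Vb)$ with $\Vb$ as in \eqref{DSP::eq::concrete_value_set3}, and then feed it into the IQC robust stability result (Theorem~4 of \cite{SchVee18}) exactly as in Theorem~\ref{DSP::theo::single_repeated}.

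For the first step, fix $i\in\{1,\dots,\nu\}$ and set $\Vb_i:=\{vI_k\mid v\in\C,\ (\bullet)^\ast P_i\smat{1\\v}\geq 0\}$, so that $\Delf(\Vb)\subseteq\Delf(\Vb_i)$ by the very definition of \eqref{DSP::eq::concrete_value_set3}. Since $P_i$ has nonpositive $(2,2)$-entry and the assumed LMI relating $Y_i$ and $M_i$ is precisely \eqref{DSP::theo::eq::positivity} with $(Y,M)$ replaced by $(Y_i,M_i)$, Theorem~\ref{DSP::theo::finite_horizon_iqc} applied with $P_0:=P_i$ shows that every $\Del\in\Delf(\Vb)$ satisfies a finite-horizon IQC with terminal cost matrix $P_i\kron Y_i$ for the multiplier $\Psi^\ast(P_i\kron M_i)\Psi$, where $\Psi=\diag(\psi,\psi)$. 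The essential observation is that all $\nu$ of these IQCs are generated by the \emph{same} outer factor $\Psi$, hence by the \emph{same} filter \eqref{DSP::eq::filter} with a common state trajectory $\xi$. Summing the $\nu$ dissipation inequalities of Definition~\ref{def4} therefore yields, along the trajectories of $\Psi$ driven by $u=\smat{z\\\Del(z)}$ with any $z\in L_{2e}^k$,
\begin{equation*}
\int_0^T y(t)^T\Big(\sum_{i=1}^\nu P_i\kron M_i\Big)y(t)\,dt + \xi(T)^T\Big(\sum_{i=1}^\nu P_i\kron Y_i\Big)\xi(T)\geq 0 \teq{for all} T\geq 0,
\end{equation*}
that is, a single finite-horizon IQC for $\Del$ with terminal cost $\sum_{i=1}^\nu P_i\kron Y_i$ and multiplier $\Psi^\ast\big(\sum_{i=1}^\nu P_i\kron M_i\big)\Psi$.

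With this IQC in hand, the second step is literally the argument behind Theorem~\ref{DSP::theo::single_repeated}: the LMI \eqref{DSP::theo::eq::main} with $P_0\kron M$ replaced by $\sum_{i=1}^\nu P_i\kron M_i$ is the strict dissipation inequality for the series interconnection of the filter $\Psi$ with the nominal plant \eqref{DSP::eq::sys}, read through the middle matrix of the multiplier, while \eqref{DSP::theo::eq::pos} with $P_0\kron Y$ replaced by $\sum_{i=1}^\nu P_i\kron Y_i$ ensures that the storage certificate $X$ dominates the terminal-cost term. Well-posedness of \eqref{DSP::eq::interconn} follows as before from convexity of $\Vb$, which here is an intersection of the convex discs and half-planes attached to $P_1,\dots,P_\nu$; hence Theorem~4 of \cite{SchVee18} applies and certifies robust stability. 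I expect the only point requiring care to be the bookkeeping around the shared outer factor $\Psi$ just highlighted — that the $\nu$ component IQCs may be added with one common filter state, and that the sign conditions propagate to the combined multiplier exactly as in the single-$P_0$ case; everything else is a verbatim repetition of the proofs of Theorems~\ref{DSP::theo::finite_horizon_iqc} and \ref{DSP::theo::single_repeated}.
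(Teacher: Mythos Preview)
Your proposal is correct and takes essentially the same approach as the paper: the paper's own proof of this theorem is the single sentence ``by repeating the arguments leading to Theorem~\ref{DSP::theo::single_repeated} almost verbatim,'' and you have carried this out explicitly by applying Theorem~\ref{DSP::theo::finite_horizon_iqc} to each $P_i$ and then superposing the resulting finite-horizon IQCs via the common filter $\Psi$. Your observation that the shared outer factor is precisely what permits the summation with one common filter state $\xi$ is the key bookkeeping point, and it matches the paper's construction of $\Pi=\Psi^\ast\big(\sum_i P_i\kron M_i\big)\Psi$ in the motivating FDI preceding the theorem.
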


	Note that we make use of dynamical scalings with identical filters $\psi$. It is not too difficult to work with different filters $\psi_1, \dots, \psi_\nu$, but then one will in general face a large Lyapunov certificate $X\in \S^d$ with $d = 2\cdot \sum_{i = 1}^\nu n_{\psi_i} + n$.

	\subsection{Dynamic Repeated Uncertainties in LMI Regions}
	
	Next, let us assume that the interconnection \eqref{DSP::eq::interconn} involves a dynamic repeated uncertainty with a Nyquist plot located in an LMI region. Hence, we suppose that
	\begin{equation}
		\label{DSP::eq::concrete_value_set4}
		\Vb  := \left\{v I_k ~\middle|~ v \in \C \text{ and } \mat{c}{I_\nu \\ vI_\nu}^\ast P_0 \mat{c}{I_\nu \\ vI_\nu} \cge 0\right\}
	\end{equation}
	for a given matrix $P_0 = \smat{Q & S \\ S^T & R}\in \S^{2\nu}$ satisfying $R \cle 0$. For example in \citep{ChiGah96} and \citep{PeaArz00}, it is shown that LMI regions can describe a wide range of practically relevant complex convex sets that are symmetric with respect to the real axis. This offers even more flexibility in imposing gain- and phase constraints on $\Delta\in\Delf(\Vb)$ if compared to \eqref{DSP::eq::concrete_value_set3}. Observe that 
	\eqref{DSP::eq::concrete_value_set3} is recovered by 
	enforcing $Q$, $S$ and $R$ in \eqref{DSP::eq::concrete_value_set4} to be diagonal.
	
	
	For didactic reasons, let us begin with criteria involving static multipliers. Note that, to the best of our knowledge, even these have not appeared in the literature so far.
	
	\begin{theo}
		\label{DSP::theo::single_repeated3_static}
		The interconnection \eqref{DSP::eq::interconn} with $\Vb$ as in
		\eqref{DSP::eq::concrete_value_set4} is robustly stable if there exist $X \in \S^n$ and $M\in \S^{\nu k}$ satisfying $M \cg 0$,  \eqref{DSP::lem::eq::fb_s_proc_lmia} and \eqref{DSP::lem::eq::fb_s_proc_lmib} with $P$ replaced by 
		\begin{equation*}
			P(P_0, M) := (\bullet)^T (P_0 \kron M) (I_2 \kron \vect(I_\nu) \kron I_k).
		\end{equation*}
		Here, $\vect(S)$ denotes the vectorization of the matrix $S$.
	\end{theo}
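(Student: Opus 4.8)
The plan is to deduce the statement directly from the full-block S-procedure, Lemma~\ref{DSP::lem::fb_s_proc}, used with the multiplier $P := P(P_0,M)$ and with $l = k$ (the uncertainty $\Del = vI_k$ being square, so $P(P_0,M) \in \S^{2k} = \S^{k+l}$). Since the hypotheses already supply $X \cg 0$, i.e.\ \eqref{DSP::lem::eq::fb_s_proc_lmia}, and the main LMI \eqref{DSP::lem::eq::fb_s_proc_lmib} with this $P$, the only thing left to check is the sign condition \eqref{DSP::lem::eq::fb_s_proc_extra_dynamic} and the value-set inequality \eqref{DSP::lem::eq::fb_s_proc_lmic} for $P = P(P_0,M)$.

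Everything rests on one Kronecker-product identity. Abbreviating $T := \vect(I_\nu) \kron I_k$ we have $P(P_0,M) = (I_2 \kron T)^T (P_0 \kron M)(I_2 \kron T)$ and $\mat{c}{I_k \\ vI_k} = \mat{c}{1 \\ v} \kron I_k$, so the mixed-product rule gives $(I_2 \kron T)\mat{c}{I_k \\ vI_k} = \mat{c}{1 \\ v} \kron T$. Inserting $\vect(I_\nu) = \sum_{j=1}^\nu e_j \kron e_j$ with the standard basis $e_1, \dots, e_\nu$ of $\R^\nu$, this becomes $\sum_{j} \big(\mat{c}{I_\nu \\ vI_\nu} e_j\big) \kron (e_j \kron I_k)$; pushing it through $P_0 \kron M$ and collecting terms yields, with $\Phi(v) := \mat{c}{I_\nu \\ vI_\nu}^\ast P_0 \mat{c}{I_\nu \\ vI_\nu} \in \S^\nu$ and $M$ partitioned into $k \times k$ blocks $M_{ij}$,
\[
\mat{c}{I_k \\ vI_k}^{\!\ast} P(P_0,M) \mat{c}{I_k \\ vI_k} \;=\; \sum_{i,j=1}^\nu \Phi(v)_{ij}\, M_{ij} \;=\; T^T \big(\Phi(v) \kron M\big) T .
\]
Running the same computation with $\mat{c}{0 \\ I_k} = \mat{c}{0 \\ 1} \kron I_k$ instead of $\mat{c}{1 \\ v} \kron I_k$ (which replaces $\Phi(v)$ by the lower-right block $R$ of $P_0$) gives $\mat{c}{0 \\ I_k}^T P(P_0,M)\mat{c}{0 \\ I_k} = T^T(R \kron M)T$. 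Establishing these two identities --- that is, disentangling the $\nu$-dimensional factor carrying $P_0$ from the $k$-dimensional factor carrying the repeated structure --- is the only genuinely technical step; a useful check is the case $\nu = 1$, where $T = I_k$, $P(P_0,M) = P_0 \kron M$, and the whole statement reduces to Corollary~\ref{DSP::coro::dynamic_rep_static}.

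The conclusion is then immediate. If $vI_k \in \Vb$, then $\Phi(v) \cge 0$ by \eqref{DSP::eq::concrete_value_set4}; as $M \cg 0$ this forces $\Phi(v) \kron M \cge 0$, and congruence with the real matrix $T$ preserves positive semidefiniteness, so \eqref{DSP::lem::eq::fb_s_proc_lmic} holds. Similarly $R \cle 0$ together with $M \cg 0$ gives $R \kron M \cle 0$, hence $\mat{c}{0 \\ I_k}^T P(P_0,M)\mat{c}{0 \\ I_k} \cle 0$, which is \eqref{DSP::lem::eq::fb_s_proc_extra_dynamic}. All four premises of Lemma~\ref{DSP::lem::fb_s_proc} thus hold and robust stability follows. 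The main obstacle, as flagged, is purely the Kronecker/permutation algebra underlying the displayed identity; everything else is a direct appeal to results already proved in the paper.
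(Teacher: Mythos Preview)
Your proposal is correct and follows essentially the same route as the paper: both reduce the claim to Lemma~\ref{DSP::lem::fb_s_proc} by showing that $\smat{I_k \\ V}^{\!*} P(P_0,M)\smat{I_k \\ V}$ equals the congruence $T^T(\Phi(v)\kron M)T$ of $\Phi(v)\kron M\cge 0$ with $T=\vect(I_\nu)\kron I_k$. The paper reaches this identity by pure mixed-product manipulations (rewriting $\smat{I_\nu \\ vI_\nu}\kron I_{k\nu}$ and then right-multiplying by $\vect(I_\nu)\kron I_k$), whereas you expand $\vect(I_\nu)=\sum_j e_j\kron e_j$ and collect the resulting block sums $\sum_{i,j}\Phi(v)_{ij}M_{ij}$; these are equivalent bookkeeping choices. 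One small plus of your write-up is that you also verify \eqref{DSP::lem::eq::fb_s_proc_extra_dynamic} explicitly via $T^T(R\kron M)T\cle 0$, which the paper leaves implicit.
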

	
	\begin{proof}
		Note at first that we have, by the rules of the Kronecker product and for any $V = vI_k \in \Vb$,
		\begin{equation}
			\label{DSP::kronecker_inequality}
			0 \cle \!\mat{c}{I_\nu \\ v I_\nu}^{\!\ast}\! P_0\! \mat{c}{I_\nu \\ v I_\nu} \kron M
			=  (\bullet)^\ast (P_0 \kron M) \left(\!\mat{c}{I_\nu \\ v I_\nu} \kron I_{k\nu} \!\right).
		\end{equation}
		Next, observe that
		\begin{equation*}
			\mat{c}{I_\nu \\ v I_\nu} \kron I_{k\nu} = \mat{c}{I_{k\nu^2} \\ v I_{k\nu^2}}
			= \mat{c}{I_{\nu^2} \\ v I_{\nu^2}} \kron I_k
		\end{equation*}
		and that, hence, a multiplication with $\vect(I_\nu) \kron I_k$ yields
		\begin{equation*}
			\begin{aligned}
				&\phantom{=}\,\left(\mat{c}{I_\nu \\ v I_\nu} \kron I_{k\nu} \right) \cdot (\vect(I_\nu) \kron I_k)
				= \mat{c}{\vect(I_\nu) \\ v\, \vect(I_\nu)} \kron I_k \\
				&= \left( \!\mat{cc}{\vect(I_\nu) & 0 \\ 0 & \vect(I_\nu)}\mat{c}{1 \\ v} \!\right) \kron I_k \\
				&= \left(\!\mat{cc}{\vect(I_\nu) & 0 \\ 0 & \vect(I_\nu)} \kron I_k \right) \left(\!\mat{c}{1 \\ v} \kron I_k\right)\\
				&= (I_2 \kron \vect(I_\nu) \kron I_k) \mat{c}{I_k \\ V}.
			\end{aligned}
		\end{equation*}
		By combining the latter identity  with \eqref{DSP::kronecker_inequality}, we conclude
		\begin{equation*}
			\begin{aligned}
				0 &\cle (\bullet)^\ast (P_0 \kron M) \left(\mat{c}{I_\nu \\ v I_\nu} \kron I_{k\nu} \right) \cdot (\vect(I_\nu) \kron I_k) \\
				&= (\bullet)^\ast (P_0 \kron M) (I_2 \kron \vect(I_\nu) \kron I_k) \mat{c}{I_k \\ V}\\
				&= P(P_0, M) \mat{c}{I_k \\ V}
			\end{aligned}
		\end{equation*}
		for any $V = vI_k \in \Vb$, which yields the claim.
	\end{proof}

	With this stability result at hand, we can proceed by formulating another novel robust stability result involving dynamic multipliers.
	This time, the corresponding finite-horizon IQC is motivated by considering the FDI
	\begin{equation*}
		\begin{aligned}
			0 &\cle (\bullet)^T\left[(\bullet)^\ast P_0 \mat{c}{I_\nu \\ \del I_\nu} \kron (\bullet)^\ast M (I_\nu \kron \psi) \right] (\vect(I_\nu) \kron I_k)
			\\
			&= (\bullet)^\ast \underbrace{\mat{cc}{\psi & 0 \\ 0 & \psi}^\ast P(P_0, M) \mat{cc}{\psi & 0 \\ 0 & \psi}}_{=: \Pi}\mat{c}{I_k \\ \Del}\text{\ \ on \ }i\R.
		\end{aligned}
	\end{equation*}
	With identical arguments as for the static case, this inequality holds for all $\Del = \del I_k \in \Delf(\Vb)$ and all (structured)
	dynamic scalings $(I_\nu \kron \psi)^\ast M (I_\nu \kron \psi)$ with $\psi \in \rhi^{m_\psi \times k}$ and $M \in \S^{\nu m_\psi}$ which are positive definite on $i\R \cup \{\infty \}$.

	Moreover, following the line of reasoning which leads to Theorem~\ref{DSP::theo::single_repeated} yields the following result.

	\begin{theo}
		\label{DSP::theo::single_repeated4}
		Let $J_\psi$ and $J_\Psi$ for $J \in \{A, B, C, D\}$ be given as in Theorem~\ref{DSP::theo::single_repeated}. Then the interconnection \eqref{DSP::eq::interconn} with $\Vb$ as in \eqref{DSP::eq::concrete_value_set4} is robustly stable if there exist $X\in \S^{2n_\psi+n}$, $Y\in \S^{\nu n_\psi}$ and $M \in \S^{\nu m_\psi}$ satisfying
		\begin{equation*}
			(\bullet)^T\mat{cc|c}{0 & Y &\\ Y & 0 & \\ \hline  && M} \mat{cc}{I_\nu \kron I_{n_\psi} & 0 \\ I_\nu \kron A_\psi & I_\nu \kron B_\psi \\ \hline I_\nu \kron C_\psi & I_\nu  \kron D_\psi} \cg 0,
		\end{equation*}
		\eqref{DSP::theo::eq::main}, and \eqref{DSP::theo::eq::pos} with $P_0 \kron M$ and $ P_0 \kron Y$ replaced by
		\begin{equation*}
			P(P_0, M)
			\teq{ and }
			P(P_0, Y),
		\end{equation*}
		respectively.
	\end{theo}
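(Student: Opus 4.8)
The plan is to run the two-stage argument behind Theorem~\ref{DSP::theo::single_repeated} almost verbatim, with the scalar scaling $\psi^\ast M\psi$ replaced by the structured scaling $(I_\nu\kron\psi)^\ast M(I_\nu\kron\psi)$ and the middle matrix $P_0\kron\bullet$ replaced by $P(P_0,\bullet)$ (with identity blocks of the size dictated by the factor in the second slot). Stage one constructs a finite-horizon IQC with a nontrivial terminal cost for $\Delf(\Vb)$ and $\Vb$ as in \eqref{DSP::eq::concrete_value_set4}; stage two feeds this IQC into the general IQC robust stability result, Theorem~4 of \cite{SchVee18}, exactly as was done for Theorem~\ref{DSP::theo::single_repeated}.

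For stage one I would first record that $I_\nu\kron\psi\in\rhi^{\nu m_\psi\times\nu k}$ has the realization $(I_\nu\kron A_\psi,\,I_\nu\kron B_\psi,\,I_\nu\kron C_\psi,\,I_\nu\kron D_\psi)$, that $I_\nu\kron A_\psi$ is Hurwitz since $A_\psi$ is, and that the first displayed LMI in the statement is precisely the KYP certificate for $(I_\nu\kron\psi)^\ast M(I_\nu\kron\psi)\cg 0$ on $i\R\cup\{\infty\}$, with storage function $\xi\mapsto\xi^TY\xi$. This is the hypothesis required to invoke the structured analogue of Theorem~\ref{DSP::theo::finite_horizon_iqc}. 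Its time-domain proof — the componentwise application of the commutation Lemma~\ref{DSP::lem::commutation} combined with Fubini's theorem — then shows that the output $y$ of the filter $\Psi=\diag(\psi,\psi)$ driven by $u=\smat{z\\\Del(z)}$ satisfies
\begin{equation*}
	\int_0^T y(t)^T P(P_0,M)\,y(t)\,dt + \xi(T)^T P(P_0,Y)\,\xi(T)\ge 0
\end{equation*}
for every $T\ge 0$, every $z\in L_{2e}^k$ and every $\Del=\del I_k\in\Delf(\Vb)$, i.e.\ a finite-horizon IQC with terminal cost $P(P_0,Y)$ for $\Pi=\Psi^\ast P(P_0,M)\Psi$. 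The congruence with $\vect(I_\nu)\kron I_k$ used to motivate the FDI preceding the statement — lifted consistently to the state and output spaces of $I_\nu\kron\psi$ — is what collapses the $\nu$-fold blown-up objects back to quantities of the size dictated by the repeated scalar uncertainty $\del I_k$, and the sign condition $R\cle 0$ enters at exactly the place where $r\le 0$ was used in Theorem~\ref{DSP::theo::finite_horizon_iqc}.

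For stage two, the filter $\Psi=\diag(\psi,\psi)$ has state dimension $2n_\psi$ and realization data $J_\Psi=\diag(J_\psi,J_\psi)$ for $J\in\{A,B,C,D\}$. Substituting this realization and the terminal cost $P(P_0,Y)$ into Theorem~4 of \cite{SchVee18} — following the passage from Theorem~\ref{DSP::theo::finite_horizon_iqc} to Theorem~\ref{DSP::theo::single_repeated} — produces a Lyapunov certificate $X\in\S^{2n_\psi+n}$ subject to \eqref{DSP::theo::eq::main} with $P_0\kron M$ replaced by $P(P_0,M)$, together with \eqref{DSP::theo::eq::pos} with $P_0\kron Y$ replaced by $P(P_0,Y)$; well-posedness of \eqref{DSP::eq::interconn} follows from the strict inequality \eqref{DSP::theo::eq::main} and the definition of $\Vb$, again as in Theorem~\ref{DSP::theo::single_repeated}.

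The main obstacle is the first stage: one has to verify that the time-domain commutation/dissipativity mechanism underlying Theorem~\ref{DSP::theo::finite_horizon_iqc} genuinely survives the substitution $\psi\rightsquigarrow I_\nu\kron\psi$ combined with a \emph{full}, non-diagonal $P_0\in\S^{2\nu}$ acting through the $\vect(I_\nu)\kron I_k$ congruence — a combination that, for dynamic multipliers, appears here for the first time — and that the bookkeeping relating the storage $\xi^TY\xi$ of the blown-up filter to the storage $\xi^TP(P_0,Y)\xi$ of $\Psi$ remains consistent after that congruence. Everything else should be a routine transcription of the arguments already given for Theorem~\ref{DSP::theo::single_repeated}, because the algebraic identity that carries the value-set description \eqref{DSP::eq::concrete_value_set4} into the FDI for $\Pi$ has been written out in the text immediately preceding the statement.
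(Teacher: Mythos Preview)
Your proposal is correct and follows essentially the same approach as the paper, which offers no explicit proof and merely states that ``following the line of reasoning which leads to Theorem~\ref{DSP::theo::single_repeated} yields the following result.'' Your outline makes that line of reasoning explicit, correctly identifies the structured scaling $(I_\nu\kron\psi)^\ast M(I_\nu\kron\psi)$ and the $\vect(I_\nu)$-congruence as the new ingredients, and is honest that the nontrivial bookkeeping lies in lifting that congruence consistently to the filter state so that the storage collapses from $Y\in\S^{\nu n_\psi}$ to $P(P_0,Y)\in\S^{2n_\psi}$.
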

	
	Note that if the matrices $Q, S, R$ characterizing the set \eqref{DSP::eq::concrete_value_set4} are diagonal, then we recover Theorem \ref{DSP::theo::single_repeated2} by restricting $Y$ and $M$ to be block diagonal matrices.

	\subsection{Equation Constraints}

	Finally, let us suppose that the interconnection \eqref{DSP::eq::interconn} involves uncertainties in $\Delf(\Vb)$ with
	\begin{equation}
		\label{DSP::eq::concrete_value_set5}
		\Vb := \left\{vI_k ~\middle|~ (\bullet)^\ast P_0 \mat{c}{1 \\ v} \cge 0 \text{ and }(\bullet)^\ast \mat{cc}{0 & i \\ -i & 0}\mat{c}{1 \\ v} = 0\right\}
	\end{equation}
	for some given matrix $P_0 = \smat{q & s \\s & r}\in \S^{2}$ with $r \cle 0$. Note that the equation constraint restricts the scalar $v$ to be real, which implies that $\Delf(\Vb)$ only contains parametric uncertainties.
	
	In order to construct a suitable finite-horizon IQC with terminal cost, we pick again a dynamic scaling $\psi^\ast M \psi$ with $\psi\in\rhi^{m_\psi\times k}$ and $M\in\S^{m_\psi}$ satisfying \eqref{DSP::eq::scaling} for the inequality in \eqref{DSP::eq::concrete_value_set5} and another dynamic scaling $\psi^\ast N \psi$ with the same filter $\psi$ and a matrix $N \in \R^{m_\psi \times m_\psi}$ satisfying
	\begin{equation*}
		0 = (\psi^\ast N \psi )^\ast + \psi^\ast N \psi = \psi^\ast (N^T + N)\psi
		\text{ ~on~ }i\R\cup \{\infty\}
	\end{equation*}
	for the equation constraint.
	The particular choice for $\psi^\ast N \psi$ permits us to conclude
	\begin{equation*}
		\begin{aligned}
			0 &= \mat{c}{1 \\ \del}^\ast \mat{cc}{0 & i \\ -i & 0}\mat{c}{1 \\ \del} \kron (-i\cdot \psi^\ast N \psi) \\
			&= (\bullet)^\ast \left(\mat{cc}{0 & i \\ -i & 0} \kron (-i\cdot N) \right) \left( \mat{c}{1 \\ \del} \kron \psi\right)\\
			&=\! (\bullet)^\ast\! \mat{cc}{0 & N \\ -N\! & 0}\!\!\mat{cc}{\psi\! & 0 \\ 0 & \!\psi}\!\! \mat{c}{I_k \\ \Del}\!
			=\! (\bullet)^\ast\! \mat{cc}{0 & N \\ N^T & 0}\!\!\mat{cc}{\psi\! & 0 \\ 0 & \!\psi}\!\! \mat{c}{I_k \\ \Del}
		\end{aligned}
	\end{equation*}
	on the imaginary axis and for any $\Del = \del I_k \in \Delf(\Vb)$.
	Together with \eqref{DSP::eq::motivating_identity}, we get
	\begin{equation*}
		\begin{aligned}
			0 &\cle  (\bullet)^\ast \!P_0 \!\mat{c}{1 \\ \del} \kron \psi^\ast\! M\psi + (\bullet)^\ast\! \mat{cc}{0 & i \\ -i & 0}\!\mat{c}{1 \\ \del} \kron (-i \psi^\ast\! N \psi)\\
			&= (\bullet)^\ast\!\underbrace{\mat{cc}{\psi \!& 0 \\ 0 & \psi}^{\!\ast}\!\left(P_0\! \kron \!M  + \mat{cc}{0 & N \\ N^T & 0}\right)\! \mat{cc}{\psi \!& 0 \\ 0 & \psi}}_{=: \Pi}\!\mat{c}{I_k \\ \Del}
		\end{aligned}
	\end{equation*}
	on $i\R$ and for any $\Del = \del I_k \in \Delf(\Vb)$.
	Note that the choice for $\psi^\ast N \psi$ assures that $-i\cdot \psi^\ast N \psi$ is Hermitian on $i\R$ and that $\Pi \in \rli^{2k \times 2k}$ has a symmetric middle matrix.
	
	Another variation of the proof of Theorem \ref{DSP::theo::finite_horizon_iqc} then leads to the following result, which is related to $\mu$-analysis via dynamic D/G-scalings if choosing $P_0 = \smat{1 & 0 \\ 0 & -1}$.

	\begin{theo}
		\label{DSP::theo::single_repeated5}
		Let $J_\psi$ and $J_\Psi$ for $J \in \{A, B, C, D\}$ be given as in Theorem~\ref{DSP::theo::single_repeated}. Then the interconnection \eqref{DSP::eq::interconn} with $\Vb$ as in \eqref{DSP::eq::concrete_value_set5} is robustly stable if there exist $X\in \S^{2n_\psi+n}$, $Y\in \S^{n_\psi}$, $Z \in \S^{n_\psi}$, $M\in \S^{m_\psi}$ and $N \in \R^{m_\psi \times m_\psi}$ satisfying \eqref{DSP::theo::eq::positivity},
		\begin{equation*}
			(\bullet)^T\mat{cc}{0 & Z \\ Z & 0} \mat{cc}{I & 0 \\ A_\psi & B_\psi} + (\bullet)^T (N^T+N) \mat{cc}{C_\psi & D_\psi} = 0%
		\end{equation*}
		as well as \eqref{DSP::theo::eq::main} and \eqref{DSP::theo::eq::pos} with $P_0 \kron M$ and $ P_0 \kron Y$ replaced by
		\begin{equation*}
			P_0 \kron M + \mat{cc}{0 & N \\ N^T & 0}
			\teq{ and }
			P_0 \kron Y + \frac{1}{2}\mat{cc}{0 & Z \\ Z & 0},
		\end{equation*}
		respectively.
	\end{theo}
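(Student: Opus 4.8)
The plan is to mimic the two-step derivation of Theorem~\ref{DSP::theo::single_repeated}: (i) show that every $\Del\in\Delf(\Vb)$ with $\Vb$ as in \eqref{DSP::eq::concrete_value_set5} obeys a finite-horizon IQC in the sense of Definition~\ref{def4}, with terminal cost matrix $P_0\kron Y+\frac{1}{2}\smat{0&Z\\Z&0}$, for the multiplier $\Pi=\Psi^\ast\big(P_0\kron M+\smat{0&N\\N^T&0}\big)\Psi$ with $\Psi:=\diag(\psi,\psi)$; and then (ii) feed this IQC into Theorem~4 of \cite{SchVee18} together with the dissipation LMI \eqref{DSP::theo::eq::main} and the positivity LMI \eqref{DSP::theo::eq::pos}. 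The frequency-domain inequality that motivates the choice of $\Pi$ has already been derived in the display preceding the statement, and — as for Theorem~\ref{DSP::theo::single_repeated} — it simultaneously secures well-posedness, i.e.\ $\det(I-DV)\neq0$ for all $V\in\Vb$; so the real content is step~(i).

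For step~(i) I would split the middle matrix and argue separately along the trajectories of $\Psi$ driven by $u=\smat{z\\\Del(z)}$, writing $\xi=\col(\xi_1,\xi_2)$ and $y=\col(y_1,y_2)$ for the state and output of $\Psi$ in its block-diagonal realization, where $(\xi_i,y_i)$ is the response of one copy of $\psi$ to $z$ and to $\Del(z)$, respectively. The summand $P_0\kron M$, together with the hypothesis \eqref{DSP::theo::eq::positivity}, is precisely the situation of Theorem~\ref{DSP::theo::finite_horizon_iqc} applied to the larger uncertainty class cut out by the inequality constraint of \eqref{DSP::eq::concrete_value_set5} alone, so it contributes $\int_0^Ty^T(P_0\kron M)y\,dt+\xi(T)^T(P_0\kron Y)\xi(T)\geq0$. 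The summand $\smat{0&N\\N^T&0}$ is \emph{lossless}: the auxiliary KYP-type equality in the statement is exactly the time-domain certificate for $\psi^\ast(N^T+N)\psi\equiv0$ on $i\R\cup\{\infty\}$ with storage function $\xi_\psi^TZ\xi_\psi$, so integrating $\frac{d}{dt}(\xi_\psi^TZ\xi_\psi)$ along the trajectory of a single copy of $\psi$ gives the \emph{equality} $\int_0^Ty_\psi^T(N^T+N)y_\psi\,dt+\xi_\psi(T)^TZ\xi_\psi(T)=0$ for any driving input. The equation constraint in \eqref{DSP::eq::concrete_value_set5} forces $v$ to be real for every $\om$, which — since $\del\in\rhi$ — means that $\del$ is a real constant $d$, so $\Del(z)=dz$, $\xi_2=d\xi_1$ and $y_2=dy_1$; this is the point where the time-domain commutation of Lemma~\ref{DSP::lem::commutation} enters, here in trivial form. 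Using $2y_1^TNy_1=y_1^T(N^T+N)y_1$ one then gets $\int_0^Ty^T\smat{0&N\\N^T&0}y\,dt=d\int_0^Ty_1^T(N^T+N)y_1\,dt=-d\,\xi_1(T)^TZ\xi_1(T)$, which cancels exactly against $\xi(T)^T\big(\frac{1}{2}\smat{0&Z\\Z&0}\big)\xi(T)=\xi_1(T)^TZ\xi_2(T)=d\,\xi_1(T)^TZ\xi_1(T)$; the factor $\frac{1}{2}$ is forced because the quadratic form of $\smat{0&Z\\Z&0}$ on $\col(\xi_1,\xi_2)$ equals $2\xi_1^TZ\xi_2$. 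Adding this identity to the inequality from the first summand yields the finite-horizon IQC of step~(i).

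For step~(ii), robust stability then follows as in the proof of Theorem~\ref{DSP::theo::single_repeated}: \eqref{DSP::theo::eq::main} is the dissipation inequality for the cascade of the filter \eqref{DSP::eq::filter} with the plant \eqref{DSP::eq::sys} and multiplier middle matrix $P_0\kron M+\smat{0&N\\N^T&0}$, while \eqref{DSP::theo::eq::pos} supplies the storage/Lyapunov certificate once the terminal cost $P_0\kron Y+\frac{1}{2}\smat{0&Z\\Z&0}$ has been subtracted, and Theorem~4 of \cite{SchVee18} delivers $\|x(t)\|\leq M\|x(0)\|$ and $x(t)\to0$. I expect the one genuinely delicate point to be the bookkeeping for the lossless part: arranging the off-diagonal middle matrix $\smat{0&N\\N^T&0}$, the extra KYP equality for $Z$ and $N^T+N$, and the $\frac{1}{2}$-scaled terminal cost $\smat{0&Z\\Z&0}$ so that this contribution enters the dissipation balance with \emph{equality} rather than merely an inequality — which is essential, since $N^T+N$ need not be sign-definite — together with verifying, as already noted before the statement, that $-i\psi^\ast N\psi$ is Hermitian on $i\R$ and that $\Pi\in\rli^{2k\times2k}$ has a symmetric middle matrix; this is exactly what the constraint $\psi^\ast(N^T+N)\psi\equiv0$ guarantees, so that Theorem~4 of \cite{SchVee18} is applicable. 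Everything else is a transcription of the proofs of Theorems~\ref{DSP::theo::finite_horizon_iqc} and \ref{DSP::theo::single_repeated}.
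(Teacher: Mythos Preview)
Your proposal is correct and follows essentially the approach the paper indicates: the paper gives no detailed proof but merely states that ``another variation of the proof of Theorem~\ref{DSP::theo::finite_horizon_iqc}'' yields the result, after having derived the motivating FDI by treating the inequality and the equality constraints in \eqref{DSP::eq::concrete_value_set5} separately and then summing. Your organization---invoking Theorem~\ref{DSP::theo::finite_horizon_iqc} as a black box for the $P_0\kron M$ summand and then exploiting that the equation constraint forces $\del$ to be a real constant (which the paper also notes explicitly) so that $\xi_2=d\xi_1$, $y_2=dy_1$ and the $\smat{0&N\\N^T&0}$ contribution becomes an exact equality via the $Z$-KYP identity---is precisely such a variation, and your bookkeeping for the $\tfrac12$ in the terminal cost is right.
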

	
	Note that this result can be related to the generalized KYP lemma proposed by \cite{IwaHar05}, but this aspect is not further explored here.
	
	\section{Example}\label{DSP::sec::example}
	
	\begin{figure}
		\begin{center}
			\includegraphics[width=0.48\textwidth, trim=20 130 20 160, clip]{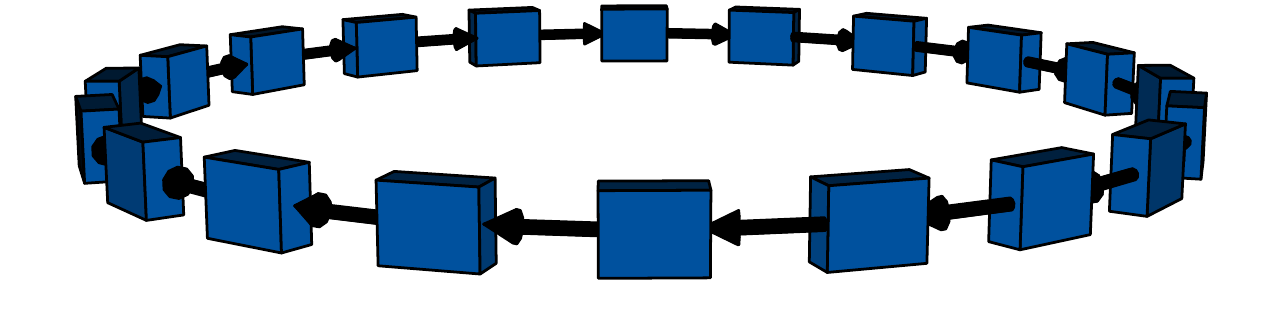}
		\end{center}
		\caption{Cyclic interconnected networked system \eqref{DSP::eq::network}.}
		\label{DSP::fig::network}
	\end{figure}

	As an illustration let us consider a networked system composed of $N = 20$ identical subsystems with description
	\begin{subequations}
		\label{DSP::eq::network}
		\begin{equation}
			\mat{c}{\dot x_k(t) \\ \hline z_k(t) \\ e_k(t)}
			= \mat{c|cc}{A & B & B_2 \\ \hline C & D & D_{12} \\ C_2 & D_{21} & D_{22}}\mat{c}{x_k(t) \\ \hline w_k(t) \\ d_k(t)}
			\label{DSP::eq::subsystem}
		\end{equation}
		for $t \geq 0$ and $k\in \{1,\dots, N\}$, where $d_1, \dots, d_N \in L_2$ are input disturbances and $e_1, \dots, e_N$ are error output signals. We suppose that the matrices describing the subsystems are given by
		\begin{equation}
			\arraycolsep=3pt
			\mat{c|c:c}{A & B & B_2 \\ \hline C & D & D_{12} \\ \hdashline C_2 & D_{21} & D_{22}}
			= \mat{cc|cc:c}{-13 & -12 & 10 & 0 & 1 \\ 1 & 0 & 0 & 0 & 0  \\ \hline -10.1 & -11.2 & 10 & 1 & 0 \\ 1 & 2 & 0 & 0 & 1 \\ \hdashline 1 & 0 & 0 & 0 & 0}
			\label{DSP::eq::subsystem_matrices}
		\end{equation}
		and that the subsystems are coupled as
		\begin{equation}
			w_k(t) = \sum_{j = 1}^N a_{kj}(z_k(t) - z_j(t))
			\label{DSP::eq::network_coupling}
		\end{equation}
		for $t\geq 0$ and $k\in \{1, \dots, N\}$. More concretely, we suppose that the link strengths $a_{kj}$ are uncertain and satisfy
		\begin{equation}
			a_{kj} \in \begin{cases}
				[0.75, 1] & \text{ if }j = k+1 \text{ or }(j,k) = (N,1) \\
				\{0\} & \text{ otherwise}
			\end{cases}
			\label{DSP::eq::network_coupling_cyclic}
		\end{equation}
	\end{subequations}
	for all $k,j \in \{1, \dots, N\}$. This means that we consider a directed cyclic interconnected network as depicted in Fig.~\ref{DSP::fig::network} with uncertain link strengths.

	For example on the basis of the results from \cite{Wie10}, it is not difficult to show that the networked system \eqref{DSP::eq::network} is robustly stable and admits a robust energy gain smaller than $\ga$ if and only if the uncertain interconnection
	\begin{equation}
		\label{DSP::eq::auxiliary_system}
		\mat{c}{\dot x(t) \\ \hline z(t) \\ e(t)}
		= \mat{c|cc}{A & B & B_2 \\ \hline C & D & D_{12} \\ C_2 & D_{21} & D_{22}}\mat{c}{x(t) \\ \hline w(t) \\ d(t)},\quad
		w(t) = \del z(t)
	\end{equation}
	is robustly stable and admits a robust energy gain smaller than $\ga$. Here, $\delta$ is a single repeated parametric uncertainty satisfying
	\begin{equation}
		\del \in \t \vb := \bigcup_{(a_{kj})_{k,j=1}^N \text{ with \eqref{DSP::eq::network_coupling_cyclic}}}\eig\left(\Ls\big((a_{kj})_{k,j=1}^N\big)\right)
		\label{DSP::eq::original_uncertainty_set}
	\end{equation}
	where $\Ls\big((a_{kj})_{k,j=1}^N\big) =: \Ls(a)$ denotes the Laplacian matrix corresponding to the graph in Fig.~\ref{DSP::fig::network} for the instance $a$ of link strengths satisfying \eqref{DSP::eq::network_coupling_cyclic}. Precisely, this matrix is given by
	\begin{equation*}
		\Ls(a)_{kj} := \begin{cases}
			\sum_{j = 1}^N a_{kj} & \text{ if }j = k, \\
			-a_{kj} & \text{ otherwise.}
		\end{cases}
	\end{equation*}
	The set $\t \vb$ in \eqref{DSP::eq::original_uncertainty_set} is somewhat complicated, but we can find a suitable superset for applying our robust analysis tools. Indeed, one can show that
	\begin{equation}
		\label{DSP::eq::covering}
		\t \vb \subset \vb := \big\{v\in \C\,:\, |v-1| \leq 1 \big\} \cap \left\{v\in \C\,:\, \left|v - \frac{3}{4}\right| \geq \frac{3}{4}\right\}.
	\end{equation}
	The boundary of the set $\vb$ and the eigenvalues of $\Ls(a)$ for several selected and randomly chosen values of
	$a$ with \eqref{DSP::eq::network_coupling_cyclic} are depicted in Fig.~\ref{DSP::fig::eigenvalues}.

	\begin{figure}
		\includegraphics[width=0.5\textwidth, trim= 10 73 10 20, clip]{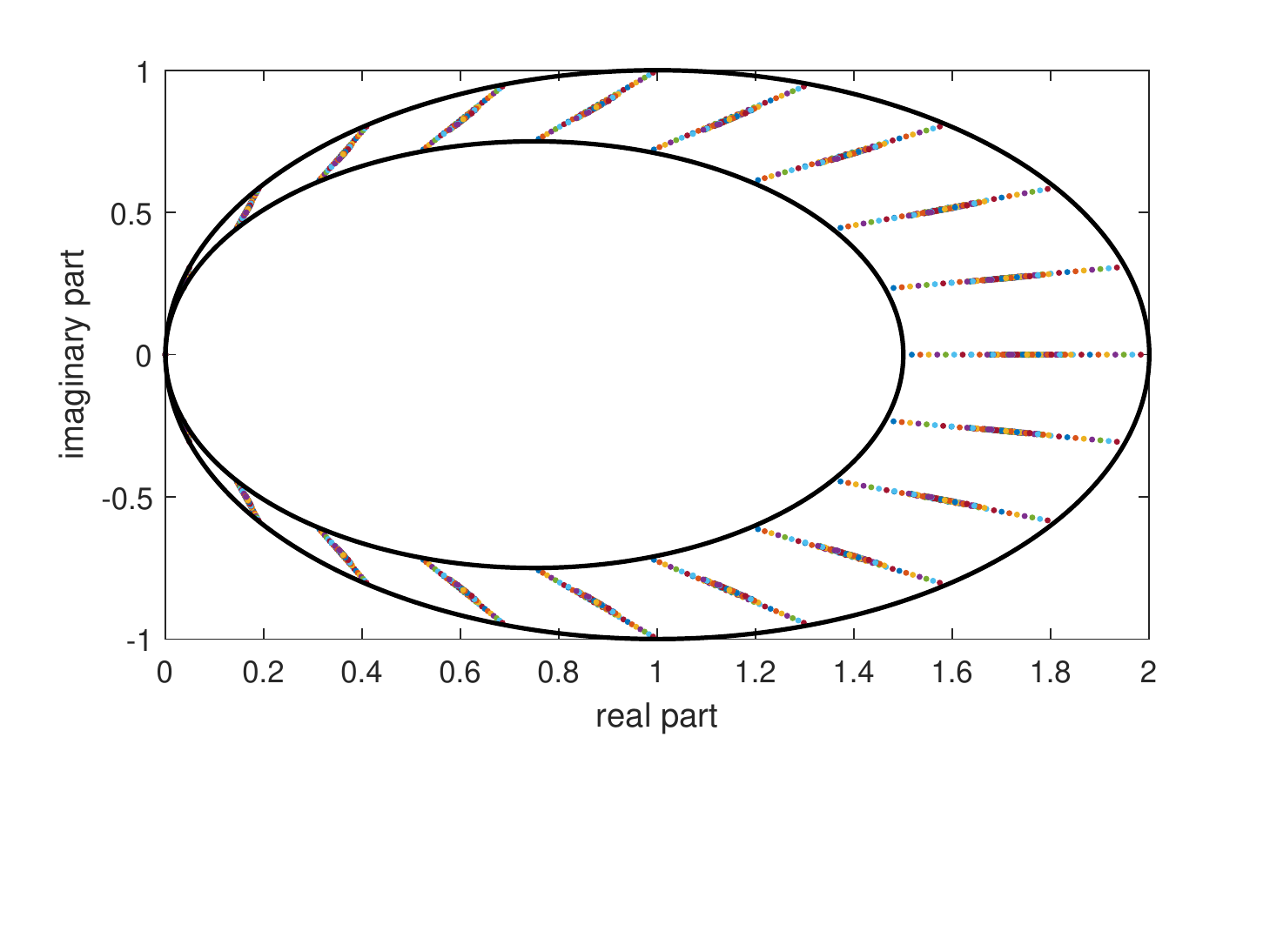}
		\caption{Eigenvalues of the Laplacian matrix $\Ls(a)$ for several instances of link strengths \eqref{DSP::eq::network_coupling_cyclic} and boundary of the set $\vb$ in \eqref{DSP::eq::covering}.}
		\label{DSP::fig::eigenvalues}
	\end{figure}
	
	Note that a robust analysis of the system \eqref{DSP::eq::auxiliary_system} based on covering $\t \vb$ with a single disk $\big\{v\in \C\,:\, |v-1| \leq 1 \big\}$ is doomed to fail, since $\del = 0.5$ is contained in this disk and since \eqref{DSP::eq::auxiliary_system} is unstable for this particular value of $\del$.
	However, since we can express the set $\Vb := \{vI_2~:~ v\in \vb \}$ equivalently as \eqref{DSP::eq::concrete_value_set3} with
	\begin{equation*}
		P_1 := \mat{cc}{0 & 1 \\ 1 & -1} \teq{ and } P_2 := \mat{cc}{0 & -0.75 \\ -0.75 & 1}
	\end{equation*}
	we can freely apply the standard extension of Theorem \ref{DSP::theo::single_repeated2}, \ref{DSP::theo::single_repeated3_static} or \ref{DSP::theo::single_repeated4} for analyzing uncertain systems with a performance channel. Recall that we do not require convexity of $\Vb$ since \eqref{DSP::eq::auxiliary_system} only involves parametric uncertainties.
	Applying the extension of Theorem \ref{DSP::theo::single_repeated2} for $n_\psi = 0$ (static scaling) and for $n_\psi = 2$, $n_\psi = 4$ (dynamic scaling) assures robust stability of the network \eqref{DSP::eq::network} and yields the optimal upper bounds
	\begin{equation*}
		0.654, \quad 0.572 \teq{ and }0.572
	\end{equation*}
	on its robust energy gain, respectively; we did choose the filter $\psi$ as in \citep{SchKoe12} with $\alpha = 2$ and $\nu = 0$, $\nu = 1$ and $\nu = 2$, respectively.
	Similarly as in \citep{PooTik95}, this demonstrates that there is a benefit of using dynamic scalings over static ones even if their McMillian degree is small.

	\section{Conclusion}\label{DSP::sec::conclusions}

	We provide an alternative proof of one of the results in \citep{Sch21} for analyzing systems affected by dynamic uncertainties by means of IQCs with dynamic multipliers. In contrast to \cite{Sch21}, we do not rely on frequency domain arguments.
	Moreover, we provide several interesting variations that permit us, e.g., to analyze the robustness of a system against dynamic uncertainties with a Nyquist plot known to be located in a given LMI region.
	

	
	\bibliography{literatur}  

	\appendix
	
	\section{Auxiliary Results and Technical Proofs}    
	\begin{lemm}
		\label{DSP::lem::commutation}
		Let $g: L_{2e}^1 \to L_{2e}^1$ and $H: L_{2e}^k \to L_{2e}^l$ be convolution maps represented as in \eqref{DSP::eq::uncertainty}. Then we have
		\begin{equation*}
			H \circ (g I_k)  = (g I_l)\circ H
		\end{equation*}
		with $g I_j \!:\! L_{2e}^j \!\to\! L_{2e}^j$, $w \!\mapsto\! (g(w_1), \dots, g(w_j))^T$ for $j\! \in\! \{k,l\}$.
	\end{lemm}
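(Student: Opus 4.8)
The plan is to verify the identity directly in the time domain by expanding both composed operators through the convolution representation \eqref{DSP::eq::uncertainty} and matching the resulting terms; no frequency-domain reasoning is needed. First I would fix a realization $(a,b,c,d)$ of the scalar map $g$ (so $a,b,c,d\in\R$) and use the given realization $(A_H,B_H,C_H,D_H)$ of $H$. Fix $z\in L_{2e}^k$ and $t\ge 0$. Applying $gI_k$ componentwise yields $(gI_k)(z)(\sigma)=\int_0^\sigma c e^{a(\sigma-s)}b\,z(s)\,ds+d\,z(\sigma)$; feeding this into $H$ and expanding produces four summands: a double integral of $C_He^{A_H(t-\sigma)}B_H\,c e^{a(\sigma-s)}b\,z(s)$ over $\{0\le s\le\sigma\le t\}$, the term $\int_0^t C_He^{A_H(t-\sigma)}B_H\,d\,z(\sigma)\,d\sigma$, the term $D_H\!\int_0^t c e^{a(t-s)}b\,z(s)\,ds$, and the feedthrough $D_H d\,z(t)$. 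Expanding $(gI_l)(H(z))(t)$ analogously gives the mirrored list, with $c e^{a(t-\sigma)}b$ now sitting to the left of $C_He^{A_H(\sigma-s)}B_H$ in the double integral, and the single integrals and feedthrough term arranged symmetrically.

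The three non-double-integral terms match at once: since the scalars $b,c,d$ and the scalar kernel $c e^{a\tau}b$ commute with $D_H$, $B_H$, $C_H$ and with $C_He^{A_H\tau}B_H$, one reads off $D_H d\,z(t)=d D_H z(t)$, $\int_0^t C_He^{A_H(t-\sigma)}B_H\,d\,z(\sigma)\,d\sigma=d\!\int_0^t C_He^{A_H(t-s)}B_Hz(s)\,ds$, and $D_H\!\int_0^t c e^{a(t-s)}b\,z(s)\,ds=\int_0^t c e^{a(t-s)}b\,D_Hz(s)\,ds$. It therefore remains to show that the two double-integral terms agree, i.e.\ that
\[
\int_0^t\!\!\int_0^\sigma C_He^{A_H(t-\sigma)}B_H\,c e^{a(\sigma-s)}b\,z(s)\,ds\,d\sigma=\int_0^t\!\!\int_0^\sigma c e^{a(t-\sigma)}b\,C_He^{A_H(\sigma-s)}B_H\,z(s)\,ds\,d\sigma.
\]
Here Fubini's theorem enters: the integrands are continuous in $(s,\sigma)$ and $z$ is locally integrable, so on the triangle $\{0\le s\le\sigma\le t\}$ the order of integration may be exchanged. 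After swapping and pulling $C_H$, $B_H$ and the scalars $b,c$ out of the inner integral, the claim reduces to
\[
\int_s^t e^{A_H(t-\sigma)}e^{a(\sigma-s)}\,d\sigma=\int_s^t e^{a(t-\sigma)}e^{A_H(\sigma-s)}\,d\sigma\teq{for}0\le s\le t,
\]
which holds because $e^{a(\sigma-s)}$ is a scalar, hence commutes with $e^{A_H(t-\sigma)}$, while the substitution $\sigma\mapsto s+t-\sigma$ maps $[s,t]$ onto itself and interchanges $t-\sigma$ with $\sigma-s$; equivalently, both sides equal $\int_0^{t-s}e^{a\rho}e^{A_H(t-s-\rho)}\,d\rho$. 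Assembling the matched terms gives $H\circ(gI_k)=(gI_l)\circ H$.

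I expect this to be essentially a bookkeeping exercise, so the main obstacle is simply care rather than ideas: one must keep track of which factors are scalars and may therefore be freely commuted past the matrices of the realization of $H$, and one must justify the Fubini step, which is unproblematic here owing to continuity of the exponential kernels on compact time intervals and local integrability of $z$. No minimality, stability, or similarity assumption on the two realizations is used; the identity is purely algebraic once the representation \eqref{DSP::eq::uncertainty} is fixed. This time-domain commutation lemma is exactly the ingredient that replaces the frequency-domain relation \eqref{DSP::eq::commutation} in the proof of Theorem~\ref{DSP::theo::finite_horizon_iqc}.
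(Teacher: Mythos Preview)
Your argument is essentially the paper's: both dispatch the three non-double-integral terms by scalar commutation and treat the double integral via Fubini together with a change of variables (the paper does two substitutions to reach a symmetric form with an indicator $\chi$, then Fubini, then undoes the substitutions; you apply Fubini first and then the single substitution $\sigma\mapsto s+t-\sigma$, which is a slightly cleaner ordering of the same manipulation). The only slip is your assumption that $g$ admits a one-dimensional realization $(a,b,c,d)\in\R^4$; this fails for any SISO $g$ of McMillan degree $\geq 2$, but your computations only use that the impulse response $C_g e^{A_g\tau}B_g$ is \emph{scalar-valued}, so replacing $c\,e^{a\tau}b$ by $C_g e^{A_g\tau}B_g$ throughout repairs the argument with no further change.
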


	\begin{proof}
		Let $g$ and $H$ be realized by $(A_g, B_g, C_g, D_g)$ and $(A, B, C, D)$, respectively. Further, let us abbreviate the functions $\t g(s) := C_g e^{A_g s}B_g$ and $\t H(s) := C e^{A s}B$. Since $g$ is SISO, note that we have in particular $\t H \t g = \t g \t H$,
		\begin{equation}
			%
			\t g D = D \t g, \quad
			\t H D_g = D_g \t H
			\text{ ~and~ }
			DD_g = D_g D.
			\label{SWD::pro::commut_siso}
		\end{equation}
		Now, let $w \in L_{2e}^k$ and $t \geq 0$ be arbitrary. Then we have via integration by substitution
		\begin{equation*}
			\begin{aligned}
				&\phantom{=}\int_{0}^t  \t H(t-s) \left(\int_{0}^s \t g(s-r)w(r) \,dr \right)\,ds\\
				&= \int_0^{t}  \hspace{-1ex}\t H(s) \left(\int_{0}^{t-s} \t g(t-s-r)w(r) \,dr \right)\,ds  \\
				&= \int_0^{t} \hspace{-1ex} \t H(s) \left(\int_0^{t-s}\t g(r) w(t-s-r)\,dr \right) \,ds  \\
				&= \int_0^{t}  \left(\int_0^{t}\t H(s) \t g(r) w(t-s-r)\chi(r, t-s)\,dr \right)\,ds;
			\end{aligned}
		\end{equation*}
		here $\chi(a,b) = 1$ if $a \leq b$ and $\chi(a,b) = 0$ otherwise.
		Using $\t H \t g = \t g \t H$ and Fubini's theorem, the last term equals
		\begin{equation*}
			\begin{aligned}
				&\phantom{=} \int_0^{t} \left(\int_0^{t}  \t g(r)\t H(s) w(t-s-r)\chi(r, t-s) \,dr \right)\,ds \\
				&= \int_0^{t}  \left(\int_0^{t} \t g(r)\t H(s) w(t-s-r)\chi(r, t-s) \,ds \right)\,dr \\
				&= \int_0^{t} \left(\int_0^{t}\t g(r) \t H(s)w(t-s-r)\chi(s, t-r) \,ds \right)\,dr.
			\end{aligned}
		\end{equation*}
		Again via integration by substitution, this is the same as
		\begin{equation*}
			\begin{aligned}		
				&\phantom{=} \int_0^{t} \t g(r)\left( \int_0^{t-r}  \t H(r) w(t-s-r) \,ds \right)\,dr  \\
				&=  \int_0^{t}  \t g(r)\left(\int_{0}^{t-r} \t H(t-r-s) w(s) \,ds \right)\,dr \\
				&=  \int_{0}^t  \t g(t-r)\left(\int_{0}^{r}  \t H(r-s) w(s) \,ds \right)\,dr
			\end{aligned}
		\end{equation*}
		which yields the statement for $D = 0$ and $D_g = 0$. The general case is obtained by using linearity and \eqref{SWD::pro::commut_siso}.
	\end{proof}

	\begin{proof}[Proof of Theorem \ref{DSP::theo::finite_horizon_iqc}]
		Let $\Del \in \Delf(\Vb)$ be arbitrary. Then, there exists some $\del \in \rhi^{1 \times 1}$ with $\Del = \del I_k$. Let $(A_\del, B_\del, C_\del, D_\del)$ be a minimal realization of $\del$ and recall that $A_\del$ is Hurwitz. Further, note that we can work with $(A_\del \kron I_k, B_\del \kron I_k, C_\del \kron I_k, D_\del \kron I_k)$ as a realization of $\Del$.
		
		Since the value set $\Vb$ is chosen as in \eqref{DSP::eq::concrete_value_set}, the FDI
		\begin{equation*}
			\mat{c}{1 \\ \del(\io)}^\ast P_0 \mat{c}{1 \\ \del(\io)} \geq 0 \teq{ holds for all } \om \in \R\cup \{\infty\}.
		\end{equation*}
		Consequently, we can infer the existence of a symmetric matrix $W$ satisfying
		\begin{equation}
			\label{DSP::pro::eq::siso_kyp_lmi}
			(\bullet)^T \mat{cc}{0 & W \\ W & 0} \mat{cc}{I & 0 \\ A_\del & B_\del} + (\bullet)^T P_0 \mat{cc}{0 & I \\ C_\del & D_\del} \cge 0
		\end{equation}
		by the the KYP lemma. Since $A_\del$ is Hurwitz, $r \leq 0$ and since the left upper block of \eqref{DSP::pro::eq::siso_kyp_lmi} is a standard Lyapunov inequality, we can conclude that $W \cle 0$ holds.
		
		Next, let $K\in\S^d$ with $d:=n_\psi +k$ denote the whole left hand side of \eqref{DSP::theo::eq::positivity} and observe that we can merge 
		the LMIs $K\cg 0$ and \eqref{DSP::pro::eq::siso_kyp_lmi}  to obtain
		\begin{equation*}
			\left((\bullet)^T \mat{cc}{0 & W \\ W & 0} \mat{cc}{I & 0 \\ A_\del & B_\del} + (\bullet)^T P_0 \mat{cc}{0 & I \\ C_\del & D_\del} \right) \kron K \cge 0
		\end{equation*}
		by standard properties of $\otimes$. By using further rules of this product, we get the identities
		\begin{equation*}
			\begin{aligned}
				&\phantom{=}\,~ (\bullet)^T \mat{cc}{0 & W \\ W & 0} \mat{cc}{I & 0 \\ A_\del & B_\del}\kron K\\&=
				(\bullet)^T \left(\mat{cc}{0 & W \\ W & 0}\kron K\right)\left(
				\mat{cc}{I & 0 \\ A_\del & B_\del}\kron I_d\right)\\&=
				(\bullet)^T \mat{cc}{0 & W\kron K \\ W\kron K & 0}
				\mat{cc}{I & 0 \\ A_\del\kron I_d & B_\del\kron I_d}
			\end{aligned}
		\end{equation*}
		and, with analogous intermediate steps,
		\begin{equation*}
			(\bullet)^T P_0 \mat{cc}{0 & I \\ C_\del & D_\del}\kron K =
			(\bullet)^T (P_0\kron K)\mat{cc}{0 & I \\ C_\del\kron I_d & D_\del\kron I_d}.
		\end{equation*}
		These permit us to conclude
		\begin{equation}
			\label{DSP::pro::eq::siso_kyp_lmi2}
			(\bullet)^T \!\mat{cc|c}{0 & W \kron K &\\ W \kron K & 0& \\ \hline  && P_0\kron K} \!\mat{cc}{I & 0 \\ A_\del \kron I_d & B_\del \kron I_d \\ \hline 0 & I \\ C_\del \kron I_d & D_\del \kron I_d}\! \cge\! 0.
		\end{equation}
		
		As a next step toward showing the validity of the IQC in Definition~\ref{def4}, we define $u = \smat{u_1 \\ u_2} = \smat{z \\ \Del(z)}$ for some arbitrary $z\in L_{2e}^k$.
		Next to this signal, we also work with the output $y=\smat{y_1 \\ y_2}$ and the state $\xi=\smat{\xi_1 \\ \xi_2}$ of the filter \eqref{DSP::eq::filter} in response to the input $u$. Here, the partitions are induced by the block diagonal structure of the realization matrices. Finally, we define the auxiliary signals
		\begin{equation*}
			\t y_i := \mat{c}{\xi_i\\u_i} \in L_{2e}^d
			\text{\ \ for\ \ }i \in \{1, 2\}.
		\end{equation*}
		By the definition of $K\in\S^d$ as the left-hand side of \eqref{DSP::theo::eq::positivity},
		we infer for all $t\geq 0$ and $i,j\in\{1,2\}$ that
		$$
		y_i(t)^TM y_j(t)+\frac{d}{dt}\xi_i(t)^T Y \xi_j(t)=\t y_i(t)^TK\t y_j(t)
		$$
		holds. As a consequence of direct calculations, we get
		\begin{equation}
			(\bullet)^T\!\big(P_0 \kron M \big) y(t) + \frac{d}{dt}(\bullet)^T\! \big( P_0 \kron Y\big) \xi(t)
			= (\bullet)^T\!\big(P_0 \kron K \big) \t y(t)
			\label{DSP::eq::K_signal_expression}
		\end{equation}
		for all $t\geq 0$.
		
		Now recall that $u_2=\Delta(u)=(\delta I_k)(u_1)$ holds and that $\t y_i$ is the response of an LTI filter with  zero initial condition to the input $u_i$ for $i\in\{1,2\}$. Therefore, Lemma~\ref{DSP::lem::commutation} implies
		\begin{equation*}
			\t y_2 = (\del I_d)(\t y_1).
		\end{equation*}
		Note that we can equivalently express this identity as
		\begin{equation*}
			\mat{c}{\dot \zeta(t) \\ \t y_2(t)} = \mat{cc}{A_\del \kron I_d& B_\del \kron I_d\\ C_\del \kron I_d & D_\del \kron I_d} \mat{c}{\zeta(t) \\ \t y_1(t)}
		\end{equation*}
		for $t\geq 0$ and with $\zeta(0)=0$.
		
		At this point we exploit \eqref{DSP::pro::eq::siso_kyp_lmi2}. By standard dissipation arguments, the function $\eta: t \mapsto \zeta(t)^T(W \kron K)\zeta(t)$ satisfies
		\begin{equation*}
			\begin{aligned}
				\dot \eta(t) &= (\bullet)^T\! \mat{cc}{0 & W\! \kron\! K \\ W\! \kron\! K & 0} \!\mat{cc}{I & 0 \\ A_\del \kron I_d & B_\del \kron I_d} \!\mat{c}{\zeta(t) \\ \t y_1(t)}\\
				&\geq -(\bullet)^T \big(P_0 \kron K \big) \mat{cc}{ 0 & I \\ C_\del \kron I_d & D_\del \kron I_d}\mat{c}{\zeta(t) \\ \t y_1(t)} \\
				&= -(\bullet)^T \big(P_0 \kron K \big)\mat{c}{\t y_1(t) \\ \t y_2(t)} = - (\bullet)^T\big(P_0 \kron K \big) \t y(t)
			\end{aligned}
		\end{equation*}
		for all $t \geq 0$. By integration, we obtain
		\begin{equation}
			\int_0^T (\bullet)^T \big(P_0 \kron K \big) \t y(t)\,dt \geq -(\eta(T) - \eta(0))  \geq 0
			\label{DSP::eq::K_inequality}
		\end{equation}
		for all $T\geq 0$, were we exploited $\eta(0) = 0$ and $\eta(T)\leq 0$. The latter two properties of $\eta$ are consequences of $\zeta(0) = 0$ as well as of $W \cle 0$ and of $K\cg 0$.

		Finally, integrating \eqref{DSP::eq::K_signal_expression} and using $\xi(0) = 0$ as well as \eqref{DSP::eq::K_inequality} yields
		\begin{multline*}
			\int_0^T y(t)^T\!\big(P_0 \kron M \big) y(t)\, dt + \xi(T)^T\! \big( P_0 \kron Y\big) \xi(T) \\
			= \int_0^T\t y(t)^T\!\big(P_0 \kron K \big) \t y(t)\, dt \geq 0
		\end{multline*}
		for all $T\geq 0$ as desired.
	\end{proof}
\end{document}